\newtheorem{theorem}{Theorem}[section]
\newtheorem{lemma}[theorem]{Lemma}
\newtheorem{assumption}{Assumption}
\newtheorem{corollary}[theorem]{Corollary}
\newtheorem{remark}[theorem]{Remark}
\newcommand{\field}[1]{\mathbb{#1}} 
\newcommand{\R}{\field{R}}
\newcommand{\Z}{\field{Z}}
\newcommand{\ind}{\mathbbm{1}}
\newcommand{\ep}{\epsilon}
\newcommand{\var}{\mathrm{Var}}
\newcommand{\cov}{\mathrm{Cov}}
\newcommand{\cum}{\mathrm{cum}}
\newcommand{\freqint}{\int^{\pi}_{-\pi}}
\newcommand{\sumn}[1][i]{\sum_{#1 = 1}^n}
\providecommand{\abs}[1]{\lvert#1\rvert}
\providecommand{\Babs}[1]{\Bigl \lvert#1\Bigr \rvert} 
\providecommand{\norm}[1]{\lVert#1 \rVert}
\providecommand{\prt}[1]{\{\, #1 \,\}}
\providecommand{\lprt}[1]{\left\{\, #1 \,\right\}}
\providecommand{\fp}[2]{\frac{\partial^{#2}}{\partial #1^{#2}}}
\newcommand{\plim}{\xrightarrow{{\mathcal P}}}
\newcommand{\dlim}{\xrightarrow{{\mathcal L}}}
\begin{document}

\title{Quantile tests in frequency domain for sinusoid models}



\author{Yan Liu}
\date{}



\maketitle

\begin{abstract}
For second order stationary processes, 
the spectral distribution function is uniquely determined by the autocovariance functions of the processes.
We define the quantiles of the spectral distribution function and propose two estimators for the quantiles.
Asymptotic properties of both estimators are elucidated and 
the difference from the quantile estimators in time domain is also indicated.
We construct a testing procedure of quantile tests from the asymptotic distribution of the estimators
and strong statistical power is shown in our numerical studies.
\end{abstract}

{\bf Keywords:} 
Frequency domain,
Quantile test,
Sinusoid models,
Asymptotic distribution.

\section{Introduction}
Nowadays, the quantile based estimation becomes a notable method in statistics.
Not only statistical inference for the quantile of cumulative distribution function is considered,
the quantile regression, a method taking place of the ordinary regression, is also broadly used
for statistical inference. (See \cite{koenker2005}.)
In the area of time series analysis, however, 
the quantile based inference is still undeveloped yet.
A fascinating approach in frequency domain, called ``quantile periodogram''
is proposed and studied in \cite{li2008, li2012}.
The method associated with copulas, quantiles and ranks are developed in  \cite{dette2015}.

As there exists a well-behaved spectral distribution function for second order stationary process, 
we introduce the quantile of the spectral distribution and 
develop a statistical inference theory for it.
We also propose a quantile test in frequency domain to test the dependence structure
of second order stationary process, 
since the spectral distribution function is uniquely determined by
the autocovariance functions of the process.

In the context of time series analysis, 
\cite{whittle1952b} mentioned that ``the search for periodicities'' constituted the whole 
of time series theory.
He proposed an estimation method based on a nonlinear model driven by a simple harmonic component.
After the work, to estimate the frequency has been a remarkable statistical analysis.
A sequential literature by
\cite{whittle1952b}, \cite{walker1971}, \cite{hannan1973}, \cite{rr1988} and \cite{qt1991}
investigated the method proposed by \cite{whittle1952b} and
pointed out the misunderstandings in \cite{whittle1952b}, respectively.
The noise structure is also generalized from independent and identically distributed 
white noise to the second order stationary process.
The main result in those works revealed the properties of the periodogram
and showed that the convergence factor of the estimator for the frequencies is $n^{3/2}$,
which is different from well known order $n^{1/2}$, although the asymptotic distribution
of the method is Gaussian.

\cite{qh2001} reviewed all the results above and proposed an alternative approach based on 
an iterative ARMA method.
In reality, they found that the nonlinear model for $\{y_t\}$ with 
a peculiar frequency structure plus stationary process $\{x_t\}$, 
called ``sinusoid models", such that
$y_t = A \cos(\lambda t + \phi) + x_t$ can be rewritten, by the trigonometric relation, as
$y_t - \beta y_{t-1} + y_{t -2} = x_t - \alpha x_{t-1} + x_{t-2}$,
where $\alpha = \beta$ depend on the peculiar frequency.
The method can be summarized by estimating $\beta$ for given $\alpha$ and
substituting $\beta$ for $\alpha$ until both $\alpha$ and $\beta$ converge.

Different from all the methods above,
we employ the check function to estimate quantiles,
the frequencies of spectral distribution function, for second order stationary process.
In view of correspondence between the spectral density function and the periodogram
for the stationary process,
we first directly apply the objective function to the bare periodogram.
It is expected the asymptotic normality of the approach
from the result by \cite{hosoya1989} on the bracketing condition in frequency case.
The approach for estimating in frequency domain certainly
has the consistency for the true value.
However, asymptotic normality of the quantile estimator based on the bare periodogram does not hold,
which is obviously different from the quantile estimation theory in time domain.
We give the results on the asymptotic properties of the estimator and modified the estimator. 
The modified estimator, by the method of smoothing, is asymptotically normal distributed.
We extended our result to the sinusoid models and applied the asymptotic distribution
to the quantile tests in the frequency domain.

The notations and symbols used in this paper are listed in the following:
for a vector or a matrix $A$, 
$A_j$ and $A_{ij}$, respectively, 
denote the $j$th and the $(i, j)$th element of corresponding vector and matrix;
$A'$ denotes the transpose of the matrix $A$;
$\cum(\irvec{X})$ denotes the joint cumulant of the random variables $\{\irvec{X}\}$;
for stationary process $\{X_t\}$, the joint cumulant $\cum_X(u_1, \dots, u_{n-1})$ 
simply denotes
$\cum(X_t, X_{t + u_1}, \dots, X_{t + u_{n - 1}})$; 
$L^p$ denotes the space of complex-valued functions on $[-\pi, \pi]$,
equipped with $L^p$ norm $\norm{g}_p$, i.e., $\{\freqint \abs{g(\omega)}^p d\omega\}^{1/p}$;
$\ind(\cdot)$ denotes the indicator function;
$e$ denotes the Napier's constant;
$I_d$ denotes the $d$-dimensional identity matrix;
$\plim$ and $\dlim$ denote the convergence in probability and the convergence in law, respectively.

\section{Preliminaries}
In this section, we review the spectral distribution functions of second order stationary processes
and introduce the quantiles of the spectral distribution functions.
\label{sec:2}
Suppose $\{X_t \,;\, t\in \Z\}$ is a zero mean second order stationary process with finite autocovariance function 
$R_X(h) = \cov(X_{t + h}, X_t)$, for $h \in \Z$. 
From Herglotz's theorem,
there exists a right continuous, non-decreasing, bounded distribution function $F_X(\omega)$
on $[-\pi, \pi]$ for the autocovariance function $R_X(h)$ of the process
such that
\begin{equation*}
 R_X(h) = \freqint e^{-i h \omega} F_X(d\omega), \quad (h \in \Z).
\end{equation*}
Explicitly, the spectral distribution function $F_X(\omega)$ is represented by
\begin{equation} \label{eq:2.1}
F_X(\omega)
=
\lim_{n \to \infty}
\frac{1}{2\pi} \sum_{h = - n}^{n}
R_X(h) \frac{\exp(- i \omega h) - 1}{-i h}.
\end{equation}

The structure of the second order stationary process can be discriminated by their own spectral distribution
function $F_X(\omega)$.
Below, we give 4 figures of spectral distribution functions of second order Gaussian stationary processes,
including White noise, MA(1) process with coefficient 0.9, AR(1) process with coefficient 0.9 and -0.9.

\begin{figure}[H]
 \subfigure[White noise]
 {\includegraphics[clip, width=0.4\columnwidth]{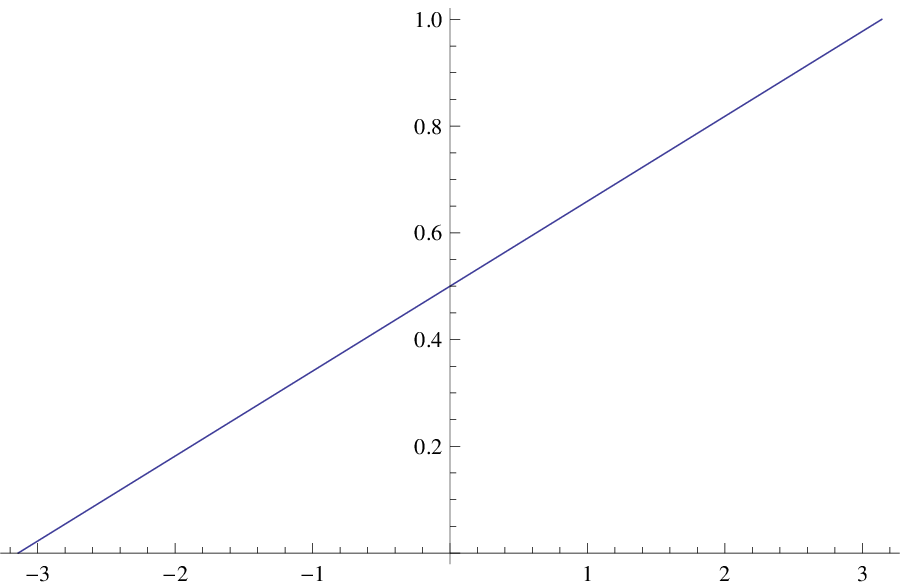}} \qquad 
 \subfigure[MA(1) process with coefficient 0.9]
 {\includegraphics[clip, width=0.4\columnwidth]{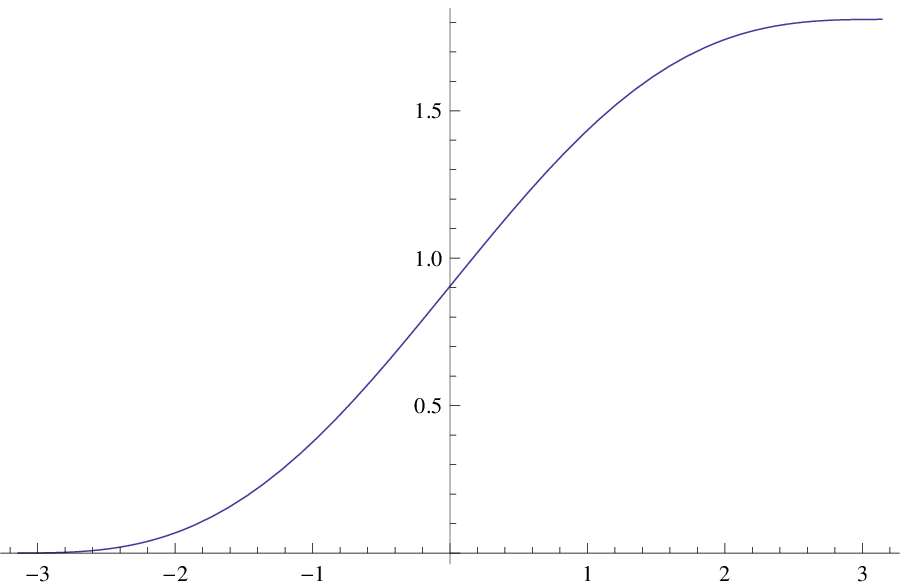}} 
\end{figure}

\begin{figure}[H]
 \subfigure[AR(1) process with coefficient 0.9]
 {\includegraphics[clip, width=0.4\columnwidth]{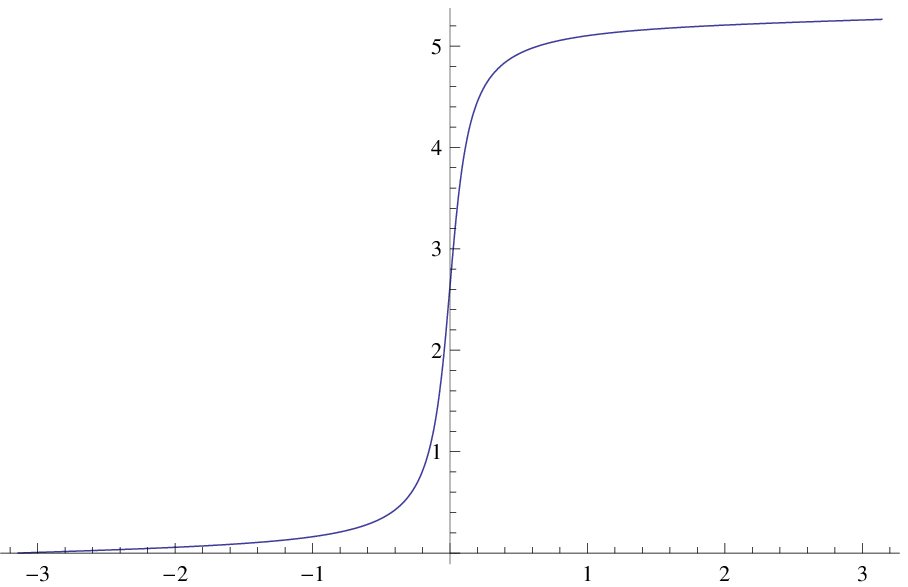}} \qquad 
 \subfigure[AR(1) process with coefficient -0.9]
 {\includegraphics[clip, width=0.4\columnwidth]{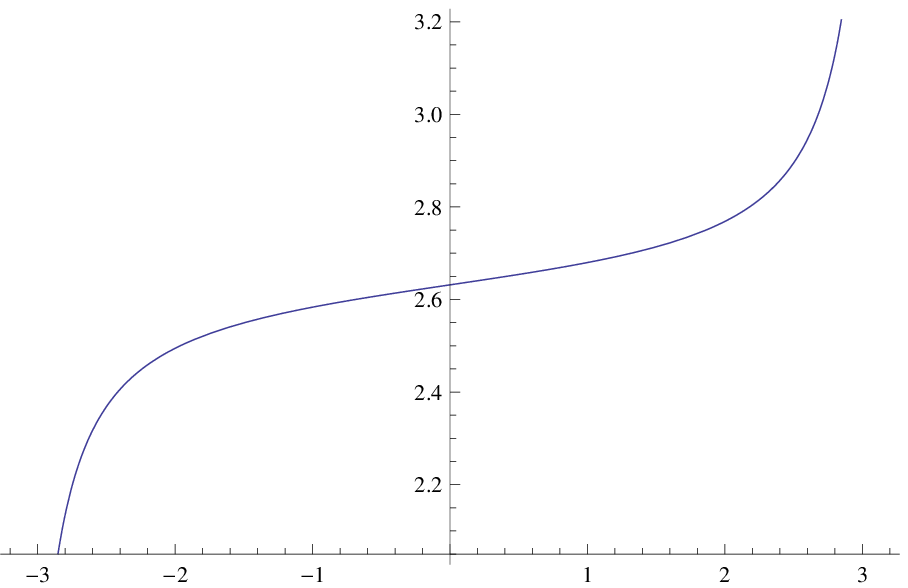}} 
 \caption{Figures of spectral distributions of second order stationary processes}
 \label{fig:1}
\end{figure}

To be specific, if the spectral distribution function $F_X(\omega)$ is absolutely continuous
with respect to Lebesgue measure, then $\{X_t\}$ has the spectral density $f_X(\omega)$,
which is corresponding to the $h$th autocovariance function $R_X(h)$ by
\[
R_X(h) = \freqint e^{-i h \omega} f_X(\omega) d\omega.
\]

Next, we introduce the $p$th quantile $\lambda_p$ of the spectral distribution function $F_X(\omega)$.
For simplicity, write $R_X(0) = \Sigma_X$.
Note that the spectral distribution function $F_X(\omega)$ takes value on $[0, \Sigma_X]$.
The generalized inverse distribution function $F^{-1}_X(\psi)$ for $0 \leq \psi \leq \Sigma_X$
is defined by
\begin{equation*} 
 F_X^{-1}(\psi) = \inf\{\omega\,; F_X(\omega) \geq \psi\}.
\end{equation*}
For $0 \leq p = \Sigma_X^{-1} \psi \leq 1$, we define the $p$th quantile $\lambda_p$ as
\begin{equation} \label{eq:2.2}
 \lambda_p := F_X^{-1}(\Sigma_X^{-1} \psi) = \inf\{\omega\,; F_X(\omega)\Sigma_X^{-1} \geq p\}.
\end{equation}

Define $\Lambda = [-\pi, \pi]$.
In the following, we show that the $p$th quantile $\lambda_p$ can be defined by
the minimizer of the following objective function $S(\theta)$, i.e.,
\begin{equation} \label{eq:2.3}
S(\theta)
=
\freqint \rho_p(\omega - \theta) F_X(d\omega),
\end{equation}
where $\rho_{\tau}(u)$, called ``the check function'' (e.g.\ \cite{koenker2005}), is defined as
\begin{equation*}
 \rho_{\tau}(u) = u(\tau - \ind(u < 0)).
\end{equation*}

\begin{theorem} \label{thm:2.1}
Suppose $\{X_t; \, t \in \Z \}$ is a zero mean
second order stationary process with spectral distribution function $F_X(\omega)$. 
Define $S(\theta)$ by \eqref{eq:2.3}.
Then the $p$th quantile $\lambda_p$ of the spectral distribution $F_X(\omega)$
is a minimizer of $S(\theta)$.
Furthermore, $\lambda_p$ is unique and satisfies
\begin{equation} \label{eq:2.4}
\lambda_p = \inf\{\omega \in \Lambda;\, S(\omega) = \min_{\theta \in \Lambda} S(\theta)\}.
\end{equation}
\end{theorem}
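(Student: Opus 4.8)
The plan is to exploit the convexity of the check function, reduce the statement to the standard subdifferential characterization of a minimizer, and then reconcile that characterization with the generalized-inverse definition of $\lambda_p$ in \eqref{eq:2.2}.

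First I would record two structural facts about $S$. Since $F_X$ is a finite positive measure on the compact set $\Lambda$ and $\omega \mapsto \rho_p(\omega-\theta)$ is bounded there, $S(\theta)$ is finite for every $\theta$. Moreover, for each fixed $\omega$ the map $\theta \mapsto \rho_p(\omega-\theta)$ is convex (the check function is piecewise linear with a single downward kink), and integrating convex functions against the positive measure $F_X(d\omega)$ preserves convexity; hence $S$ is convex on $\Lambda$. Convexity already guarantees that the minimizing set $\{\theta : S(\theta) = \min_{\theta} S(\theta)\}$ is a closed subinterval of $\Lambda$, so the infimum in \eqref{eq:2.4} is well defined and, once shown to be attained, unique as a value.

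Next I would compute the one-sided derivatives of $S$. Splitting the integral at $\omega = \theta$ and using that $\rho_p(\omega-\theta)$ equals $p(\omega-\theta)$ on $\{\omega \geq \theta\}$ and $(p-1)(\omega-\theta)$ on $\{\omega < \theta\}$, differentiation in $\theta$ (the boundary contribution vanishes because the integrand is zero at $\omega = \theta$) yields, with $F_X(\theta^-)$ the left limit and $\Sigma_X = F_X(\pi)$,
\begin{equation*}
S'_-(\theta) = F_X(\theta^-) - p\,\Sigma_X, \qquad S'_+(\theta) = F_X(\theta) - p\,\Sigma_X .
\end{equation*}
The care needed here is purely measure-theoretic: because $F_X$ may carry atoms, the left and right derivatives genuinely differ (they are the two endpoints of the subdifferential), so one must keep $\ind(\omega < \theta)$ and $\ind(\omega \leq \theta)$ distinct and justify the interchange of limit and integral by dominated convergence.

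The optimality condition $S'_-(\theta) \leq 0 \leq S'_+(\theta)$ then reads $F_X(\theta^-) \leq p\,\Sigma_X \leq F_X(\theta)$. I would verify that $\lambda_p$ meets it: by right continuity of $F_X$ the infimum in \eqref{eq:2.2} is attained, giving $F_X(\lambda_p) \geq p\,\Sigma_X$, while $F_X(\omega) < p\,\Sigma_X$ for every $\omega < \lambda_p$ forces $F_X(\lambda_p^-) \leq p\,\Sigma_X$; hence $\lambda_p$ is a global minimizer. Finally, to obtain \eqref{eq:2.4}, I would note that any $\theta < \lambda_p$ has $F_X(\theta) < p\,\Sigma_X$, so $S'_+(\theta) < 0$ and $S$ is still strictly decreasing there; thus no point below $\lambda_p$ can minimize $S$, and since $\lambda_p$ itself does, $\lambda_p$ is exactly the smallest minimizer, i.e.\ the infimum in \eqref{eq:2.4}. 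The only real obstacle is the bookkeeping of left and right limits and atoms in the derivative computation; once those one-sided derivatives are in hand, matching them to the definition \eqref{eq:2.2} of $\lambda_p$ is immediate.
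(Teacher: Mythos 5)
Your proposal is correct and takes essentially the same route as the paper's proof: compute the one-sided derivative $S'_+(\theta) = F_X(\theta) - p\,\Sigma_X$ and match its sign change against the generalized-inverse definition \eqref{eq:2.2} of $\lambda_p$ to conclude that $\lambda_p$ is the smallest minimizer. Your version is merely more explicit than the paper's (which records only the right derivative and states the sign dichotomy directly), adding the convexity/subdifferential framing and the careful bookkeeping of atoms and left limits.
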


The representation \eqref{eq:2.4} of the $p$th quantile $\lambda_p$ of the spectral distribution function $F_X(\omega)$
is useful when we consider the estimation theory of $\lambda_p$.
From the definition of the spectral distribution function $F_X(\omega)$,
$F_X(\omega)$ is uniquely determined by the autocovariance function $R_X(h)$ $(h \in \Z)$.
Accordingly, the dependence structure of the second order stationary process $\{X_t;\, t \in \Z\}$
can be discriminated by the $p$th quantile $\lambda_p$ since $\lambda_p \not = \lambda_p'$
if $p \not = 0, 1/2, 1$ and $F_X(\omega) \not = c F_X'(\omega)$, $c \in \R$.

Let us consider the estimation procedure for $\lambda_p$.
Suppose the observation stretch of the process is defined by $\{X_t \,;\, 1 \leq t \leq n\}$.
The parameter space for the $p$th quantile $\lambda_p$ is defined by $\Lambda$. 
$\lambda_p$ is in the interior of $\Lambda$.
The objective function $S_n(\theta)$ for estimation can be defined by
\begin{equation} \label{eq:2.5}
 S_n(\theta) = \freqint \rho_{p}(\omega - \theta) I_{n, X}(\omega) d\omega,
\end{equation}
where $I_{n, X}(\omega)$ is the periodogram based on the observation stretch, and defined by
\begin{equation} \label{eq:2.6}
 I_{n, X}(\omega) = \frac{1}{2 \pi n} 
 \Babs{\sum_{j=1}^n X_j e^{ij \omega}}^2.
\end{equation} 
Hence, the estimator $\hat{\lambda}_p$ for $\lambda_p$ can be defined by
\begin{equation} \label{eq:2.7}
 \hat{\lambda}_p \equiv \hat{\lambda}_{p, n}  = \arg \min_{\theta \in \Lambda} S_n(\theta).
\end{equation}

\section{Asymptotic distribution of $\hat{\lambda}_p$ for stationary processes} \label{sec:3}
In this section, we consider the asymptotic properties of the estimator $\hat{\lambda}_p$ defined by \eqref{eq:2.7}
for stationary process $\{X_t; \, t \in \Z\}$ under the following assumptions.

\begin{assumption} \label{asp:3.1}
\begin{enumerate}[{\rm (i)}]
 \item $\{X_t\}$ is a zero mean,
 strictly stationary real valued process, all of whose moments exist with
 \[
 \sum_{u_1, \dots, u_{k-1} = -\infty}^{\infty} \abs{\cum_X(u_1, \dots, u_{k-1})} < \infty,
 \quad
 \text{for $k = 2, 3, \dots$}.
 \]
 \item $f_X(\omega) \in \text{Lip}(\alpha)$ for $\alpha > 1/2$.
\end{enumerate}
\end{assumption}
Under Assumption \ref{asp:3.1}, the fourth order spectral density is defined by
\[
Q_X(\omega_1, \omega_2, \omega_3)
=
\frac{1}{(2\pi)^3}
\sum_{t_1, t_2, t_3 = -\infty}^{\infty}
\exp\{-i (\omega_1 t_1 + \omega_2 t_2 + \omega_3 t_3)\}
\cum_X(t_1, t_2, t_3).
\]

First, we show the consistency of the estimator $\hat{\lambda}_p$ under Assumption \ref{asp:3.1}.
\begin{theorem} \label{thm:3.2}
Suppose $\{X_t;\, t\in \Z\}$ satisfies Assumption \ref{asp:3.1} and
the $p$th quantile $\lambda_p$ of the spectral distribution
of $\{X_t\}$ is defined by \eqref{eq:2.2}. 
If 
$\hat{\lambda}_p$ is defined by \eqref{eq:2.7}, then we have
\[
\hat{\lambda}_p \plim \lambda_p. 
\]
\end{theorem}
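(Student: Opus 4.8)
The plan is to treat $\hat{\lambda}_p$ as an M-estimator and to deduce its consistency from the convergence of the random objective $S_n$ to the deterministic limit $S$, exploiting that both are convex. Since the check function $\rho_p$ is convex and $\omega - \theta$ is affine in $\theta$, the map $\theta \mapsto \rho_p(\omega - \theta)$ is convex for each fixed $\omega$; integrating against the nonnegative measures $I_{n,X}(\omega)\,d\omega$ and $F_X(d\omega)$ preserves convexity, so $S_n(\theta)$ and $S(\theta)$ are both convex on $\Lambda$. By Theorem~\ref{thm:2.1}, $S$ has the unique minimizer $\lambda_p$, which lies in the interior of $\Lambda$. For convex objectives, the stochastic convexity lemma (in the spirit of the quantile-regression asymptotics of \cite{koenker2005}) guarantees that pointwise convergence in probability, $S_n(\theta) \plim S(\theta)$ for each fixed $\theta$, already forces the minimizers to converge, $\hat{\lambda}_p \plim \lambda_p$. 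The whole argument thus reduces to verifying pointwise convergence of $S_n$.

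For fixed $\theta$, I would write $g_\theta(\omega) = \rho_p(\omega - \theta)$, which is bounded and Lipschitz on $\Lambda$, and split $S_n(\theta) - S(\theta)$ into a bias part $E[S_n(\theta)] - S(\theta)$ and a centered part $S_n(\theta) - E[S_n(\theta)]$. For the bias I would use the representation $E[I_{n,X}(\omega)] = \freqint F_n(\omega - \lambda) f_X(\lambda)\,d\lambda$, where $F_n$ denotes the Fej\'er kernel, so that $E[S_n(\theta)] = \freqint g_\theta(\omega)\,E[I_{n,X}(\omega)]\,d\omega$. Because $F_n$ is an approximate identity and $f_X \in \mathrm{Lip}(\alpha)$ by Assumption~\ref{asp:3.1}(ii), one has $E[I_{n,X}(\omega)] \to f_X(\omega)$, and bounded convergence yields $E[S_n(\theta)] \to \freqint g_\theta(\omega) f_X(\omega)\,d\omega = S(\theta)$.

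The main obstacle is showing that the centered part vanishes, i.e. $\var(S_n(\theta)) \to 0$, precisely because the periodogram is itself pointwise inconsistent. Starting from
\[
\var(S_n(\theta)) = \freqint \freqint g_\theta(\omega_1) g_\theta(\omega_2)\, \cov\!\big(I_{n,X}(\omega_1), I_{n,X}(\omega_2)\big)\, d\omega_1\, d\omega_2,
\]
I would insert the classical second-order expansion of the periodogram covariance, which under Assumption~\ref{asp:3.1}(i) (summable cumulants, hence a bounded fourth-order spectral density $Q_X$) takes the form
\[
\cov\!\big(I_{n,X}(\omega_1), I_{n,X}(\omega_2)\big) = \frac{2\pi}{n}\big[F_n(\omega_1 - \omega_2) + F_n(\omega_1 + \omega_2)\big] f_X(\omega_1) f_X(\omega_2) + \frac{2\pi}{n} Q_X(\omega_1, -\omega_1, -\omega_2) + o(n^{-1}).
\]
The Fej\'er terms act as approximate identities concentrating on the diagonals $\omega_1 = \pm \omega_2$, so $\freqint\freqint g_\theta(\omega_1)g_\theta(\omega_2)f_X(\omega_1)f_X(\omega_2)F_n(\omega_1-\omega_2)\,d\omega_1\,d\omega_2 \to \freqint g_\theta(\omega)^2 f_X(\omega)^2\,d\omega = O(1)$; multiplied by the prefactor $n^{-1}$, this contributes $O(n^{-1})$, and the boundedness of $Q_X$ makes the cumulant term $O(n^{-1})$ as well. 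Hence $\var(S_n(\theta)) = O(n^{-1}) \to 0$, and Chebyshev's inequality gives $S_n(\theta) \plim S(\theta)$. Combined with the convexity reduction and the uniqueness from Theorem~\ref{thm:2.1}, this yields $\hat{\lambda}_p \plim \lambda_p$. I expect the careful bookkeeping in the covariance expansion --- in particular justifying the $o(n^{-1})$ remainder uniformly enough to pass it through the double integral --- to be the most delicate step.
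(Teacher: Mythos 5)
Your proposal is correct and takes essentially the same route as the paper: convexity of $S_n$ and $S$, pointwise convergence in probability established by a bias/variance split of $S_n(\theta)-S(\theta)$ (bias via the Fej\'er-kernel smoothing of $E I_{n,X}$ under the Lipschitz condition, variance $O(n^{-1})$ via the fourth-order cumulant summability), and then a convexity lemma --- the paper invokes \cite{pollard1991} --- to pass from pointwise convergence to convergence of the minimizers given the unique minimizer $\lambda_p$ from Theorem \ref{thm:2.1}. The only difference is that you spell out the periodogram covariance expansion that the paper delegates to references, which is a matter of detail rather than of approach.
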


The consistency of the estimator \eqref{eq:2.7} is not difficult to expect.
The result, however, requires the continuity of the spectral distribution function $F_X(\omega)$,
a strong assumption, if we stand on the estimator \eqref{eq:2.7}.
We will modify the estimator \eqref{eq:2.7} by a new estimator later to loose Assumption \ref{asp:3.1}.

Next, we investigate the asymptotic distribution of the estimator $\hat{\lambda}_p$.
We impose the following assumption on $\{X_t\}$ instead of Assumption \ref{asp:3.1},
which is stronger than Assumption \ref{asp:3.1}.
\begin{assumption} \label{asp:3.2}
 $\{X_t\}$ is a zero mean, strictly stationary real valued process, all of whose moments exist with
 \[
 \sum_{u_1, \dots, u_{k-1} = -\infty}^{\infty} 
 \Bigl(
 1 + \sum_{j = 1}^{k - 1} \abs{u_j}
 \Bigr)\abs{\cum_X(u_1, \dots, u_{k-1})} < \infty,
 \quad
 \text{for $k = 2, 3, \dots$}.
 \]
\end{assumption}
The asymptotic distribution of $\hat{\lambda}_p$ is given as follows.

\begin{theorem} \label{thm:3.3}
Suppose $\{X_t;\, t\in \Z\}$ satisfies Assumption \ref{asp:3.2} and
the $p$th quantile $\lambda_p$ of the spectral distribution
of $\{X_t\}$ is defined by \eqref{eq:2.2}. 
If 
$\hat{\lambda}_p$ is defined by \eqref{eq:2.7}, then we have
\begin{equation*}
\sqrt{n} \, \,(\hat{\lambda}_p - \lambda_p ) \dlim  \mathscr{E}^{-2} \mathcal{N}(0, \sigma^2),  
\end{equation*}
where $\mathscr{E}$ is a random variable distributed as exponential distribution
with mean $f_X(\lambda_p)$ and 
\begin{multline*}
  \sigma^2
 =
 \pi p^2 \int_{-\pi}^{\pi} f_X(\omega)^2 d\omega
 + 
 2\pi(1-4p) \int_{-\pi}^{\lambda_p} f_X(\omega)^2 d\omega \\
 +
 2\pi
 \Bigl\{
 \int_{-\pi}^{\lambda_p} \int_{-\pi}^{\lambda_p}
 Q_X(\omega_1, \omega_2, -\omega_2) d\omega_1 d\omega_2\\
 +
 \freqint \freqint p^2
 Q_X(\omega_1, \omega_2, -\omega_2) d\omega_1 d\omega_2\\
 -
 2p   \int_{-\pi}^{\lambda_p} \freqint
  Q_X(\omega_1, \omega_2, -\omega_2) d\omega_1 d\omega_2
 \Bigr\}.
\end{multline*}
The random variables $\mathscr{E}$ and $\mathcal{N}$ are correlated according to a quantity concerning
with the third order cumulants of the process $\{X_t\}$.
If the process $\{X_t\}$ is Gaussian or symmetric around 0, then $\mathscr{E}$ and $\mathcal{N}$
are independent.
\end{theorem}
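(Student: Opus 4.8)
The plan is to treat $\hat{\lambda}_p$ as the minimiser of the convex, smooth criterion $S_n$ and to linearise its first-order condition around $\lambda_p$. First I would note that for each fixed $\omega$ the map $\theta\mapsto\rho_p(\omega-\theta)$ is convex with a single kink at $\theta=\omega$; integrating against the nonnegative measure $I_{n,X}(\omega)\,d\omega$ makes $S_n$ convex and continuously differentiable, with derivative $S_n'(\theta)=\Psi_n(\theta):=\int_{-\pi}^{\theta}I_{n,X}(\omega)\,d\omega-p\freqint I_{n,X}(\omega)\,d\omega$ and second derivative $S_n''(\theta)=I_{n,X}(\theta)$. Consequently $\hat{\lambda}_p$ is the unique zero-crossing of $\Psi_n$, while $\lambda_p$ solves $F_X(\lambda_p)-pF_X(\pi)=0$; since Theorem~\ref{thm:3.2} already supplies $\hat{\lambda}_p\plim\lambda_p$, it suffices to study the behaviour in a shrinking neighbourhood of $\lambda_p$.

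Next I would pin down the Gaussian factor as the limit of the score at the truth. Writing $\Psi_n(\lambda_p)=\freqint g(\omega)I_{n,X}(\omega)\,d\omega$ with $g(\omega)=\ind(\omega<\lambda_p)-p$, this is a weighted integral of the periodogram and hence a quadratic form in $\{X_t\}$; under Assumption~\ref{asp:3.2} the central limit theorem for integrated periodograms gives $\sqrt{n}\,\Psi_n(\lambda_p)\dlim\mathcal{N}(0,\sigma^2)$. The variance is then read off from the periodogram covariance kernel: the two Fej\'er-type contributions, together with the evenness of $f_X$ and the identity $g(\omega)^2=(1-2p)\ind(\omega<\lambda_p)+p^2$, produce the second-order terms $\pi p^2\freqint f_X(\omega)^2\,d\omega$ and $2\pi(1-4p)\int_{-\pi}^{\lambda_p} f_X(\omega)^2\,d\omega$, while the fourth-order cumulant term contributes $2\pi\int\!\!\int g(\omega_1)g(\omega_2)Q_X(\omega_1,\omega_2,-\omega_2)\,d\omega_1\,d\omega_2$, which is exactly the three $Q_X$ integrals in the statement.

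The decisive and nonstandard step is the curvature. A Newton-type expansion of the first-order condition yields $\hat{\lambda}_p-\lambda_p\approx-\Psi_n(\lambda_p)/C_n$, so the rate is $\sqrt{n}$ and the limit is a ratio of the Gaussian numerator to a curvature $C_n$. The crux, and the reason asymptotic normality fails here in contrast with the time-domain theory, is that this curvature is carried by the \emph{bare} periodogram, and the periodogram at the fixed interior frequency $\lambda_p$ does not concentrate at $f_X(\lambda_p)$ but converges in law to an exponential variable $\mathscr{E}$ with mean $f_X(\lambda_p)$. I would therefore establish the joint convergence of the global score $\sqrt{n}\,\Psi_n(\lambda_p)$ and the local periodogram at $\lambda_p$, and then obtain the distributional limit of the ratio by the continuous mapping theorem, using the convexity of $S_n$ (a convex-process/argmin argument) to transfer convergence of the localised criterion to convergence of its minimiser. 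The dependence between $\mathscr{E}$ and $\mathcal{N}$ is precisely the cross-covariance between the squared modulus of the discrete Fourier transform at $\lambda_p$ and the integrated-periodogram score; this is a third-order (bispectral) quantity, hence it vanishes whenever all third-order cumulants vanish, i.e.\ for Gaussian or symmetrically distributed $\{X_t\}$, which yields independence in those cases.

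I expect the main obstacle to be the curvature analysis and, in particular, the emergence of the exponent $\mathscr{E}^{-2}$ rather than a single inverse power. The key is to show that the effective second-order behaviour of $S_n$ near $\hat{\lambda}_p$ is dictated by the non-averaging local periodogram, not by its smoothed mean $f_X(\lambda_p)$, and to track how the coupling between the random minimiser location and the local magnitude of the periodogram compounds into two inverse powers of $\mathscr{E}$. Concretely I would analyse the rescaled local criterion $\delta\mapsto n\{S_n(\lambda_p+\delta/\sqrt{n})-S_n(\lambda_p)\}$, prove its joint weak convergence together with $(\mathscr{E},\mathcal{N})$ to a limit whose unique minimiser is $\mathscr{E}^{-2}\mathcal{N}(0,\sigma^2)$, and then conclude by the convexity argument; controlling this local periodogram process and its correlation with the global score is where the usual time-domain quantile arguments break down and where the real work lies.
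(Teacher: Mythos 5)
Your proposal is correct and follows essentially the same route as the paper's proof: localize via $M_n(\delta)=n\{S_n(\lambda_p-\delta/\sqrt{n})-S_n(\lambda_p)\}$, split it (the paper does this by Knight's identity) into a score term handled by the integrated-periodogram CLT with exactly the variance $\sigma^2$ you compute, plus a curvature term $\tfrac{1}{2}\delta^2 I_{n,X}(\lambda_p)$ whose non-degenerate exponential limit, joint convergence with the score (the paper's Lemmas \ref{lem:6.2}--\ref{lem:6.4}), and an argmin/continuous-mapping step give the conclusion, with the third-cumulant quantity $\Delta_3$ governing the $\mathscr{E}$--$\mathcal{N}$ dependence exactly as you describe. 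The one obstacle you flag, the power $\mathscr{E}^{-2}$ versus a single inverse power, is a genuine tension in the paper itself: its own argmin computation yields $\delta=\mathscr{E}^{-1}\mathcal{N}$, matching your Newton heuristic, while the theorem statement records $\mathscr{E}^{-2}\mathcal{N}(0,\sigma^2)$.
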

Although the estimator $\hat{\lambda}_p$, defined by \eqref{eq:2.7}, is consistent,
the asymptotic distribution of $\hat{\lambda}_p$ is very hard to use in practice.
A modified estimator $\hat{\lambda}^*_p$ will given in the next section for quantile tests.

\section{Hypotheses testing for sinusoid models} \label{sec:4}
In this section, we consider the following testing problem ($\star$),
\begin{align}
 H:& \ Y_t = X_t \notag \\[0.25cm]
 &\text{versus}   \notag\\
 A: Y_t = \sum_{j = 1}^{J} &R_j \cos(\lambda_j t + \phi_j) + X_t, \label{eq:4.8}
\end{align}
where $\{X_t\}$ is a zero mean second order stationary process with finite autocovariance function $R_X(h)$
as before.
$\{\phi_j\}$ is uniformly distributed on $(-\pi, \pi)$, independent of $\{X_t\}$.
$\{R_j\}$ and $\{\lambda_j\}$ are real constants. 
In addition, suppose there exists at least one $R_j$ such that  $R_j \not = 0$.
In the alternative, the autocovariance function $R_Y(h)$ of $\{Y_t\}$ is 
\[
R_Y(h) = 
\frac{1}{2} \sum_{j = 1}^J R_j^2 \cos(\lambda_j h) + R_X(h).
\]
From \eqref{eq:2.1}, the spectral distribution function $F_Y(\omega)$ is represented by
\[
F_Y(\omega)
=
\frac{1}{2} \sum_{j = 1}^J R_j^2 \mathcal{H}(\omega - \lambda_j) + F_X(\omega),
\]
where $\mathcal{H}(\omega)$ is so called Heaviside step function such that
\[
\mathcal{H}(\omega)
=
\begin{cases}
1, \quad \text{if $\omega \geq 0$}, \\
0, \quad \text{otherwise}.
\end{cases}
\]
As for the alternative hypothesis, $F_Y(\omega) \not = F_X(\omega)$ if $\omega \not = -\pi$, $0$ or $\pi$.

As what we have seen in Section \ref{sec:3},
the asymptotic distribution of the estimator $\hat{\lambda}_p$ is peculiar with stronger assumptions
while it acts like a sandwich form.
We will modify $\hat{\lambda}_p$ by the method of smoothing.
We introduce the modified quantile estimator $\hat{\lambda}_p^*$
for the spectral distribution function of the sinusoid models $\{Y_t\}$
and test the null hypothesis $H$ by quantile test below.

Let us first introduce an extension of periodogram \eqref{eq:2.6} by
\[
I_{n, Y}^*(\omega)
=
\sum_{\abs{h} < n}
C_n^Y(h) \exp(-i h \omega),
\]
where $C_n^Y(h)$ is the sample autocovariance of $\{Y_t\}$.
The smoothed periodogram is defined based on a window function $A(\omega)$
such that
\begin{equation} \label{eq:4.9}
\hat{f}_Y(\omega)
=
\frac{1}{2\pi}
\sum_{\abs{h} \leq m}
\phi
\Bigl(
\frac{h}{m}
\Bigr) 
\freqint
I_{n, Y}^*(\lambda) \exp(-i h(\omega - \lambda)) d\lambda.
\end{equation}
Assumptions on the window function $\phi(\omega)$ are given as follows.
\begin{assumption} \label{asp:4.3}
 Let $\phi(\omega)$ satisfy
\begin{enumerate}[{\rm (i)}]
\item $m \to \infty$ and $m/n \to 0$ as $n \to \infty$.
\item $\phi(0) = 1$.
\item $\phi(-\omega) = \phi(\omega)$ and $\abs{\phi(\omega)} \leq 1$ for all $\omega \in \Lambda$.
\item $\phi(\omega) = 0$ for $\abs{\omega} > 1$.
\item The pair $(\phi, f_Y)$ satisfies $\phi(\cdot) f_Y(\cdot) \in \mathcal{L}^u$
for some $u$, $1 < u \leq 2$, and suppose that there exists $c > 0$ such that
\[
\sup_{\abs{\lambda} < \ep} \norm{\phi(\cdot) \{f_Y(\cdot) - f_Y(\cdot - \lambda)\}}_u = O(\ep^c)
\]
as $\ep \to 0$.
\end{enumerate}
\end{assumption}

Let us introduce the modified quantile estimator $\hat{\lambda}^*_p$.
Following \eqref{eq:2.6}, define the objective function $S^*_n(\theta)$ by
\[
S^*_n(\theta)
=
\freqint \rho_p(\omega - \theta) \hat{f}_Y(\omega) d\omega.
\]
The modified estimator $\hat{\lambda}^*_p$, then, is 
\begin{equation} \label{eq:4.10}
\hat{\lambda}^*_p = \arg \min_{\theta \in \Lambda} S^*_n(\theta).
\end{equation}
\begin{theorem} \label{thm:4.4}
Suppose $\{Y_t;\, t\in \Z\}$ is defined by \eqref{eq:4.8}.
The $p$th quantile $\lambda_p$ of the spectral distribution
of $\{Y_t\}$ is defined by \eqref{eq:2.2}. 
If 
$\hat{\lambda}_p^*$ is defined by \eqref{eq:4.10}, then we have
\[
\hat{\lambda}_p^* \plim \lambda_p. 
\]
\end{theorem}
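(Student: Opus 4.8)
The plan is to treat $\hat\lambda_p^*$ as an argmin (M-)estimator and to run the standard programme: (a) identify the deterministic limit of the objective and show it is minimized uniquely and in a well-separated way at the target $\lambda_p$; (b) prove that $S_n^*$ converges to this limit uniformly in probability on the compact set $\Lambda$; (c) conclude by the usual argmin-consistency argument. A useful preliminary observation, which removes any dependence on the exact normalization in \eqref{eq:4.9}, is that both the objective \eqref{eq:2.3} and its minimizer are invariant under multiplication of the spectral measure by a positive constant: for any $c>0$, $\arg\min_\theta \freqint \rho_p(\omega-\theta)\,c\,F_Y(d\omega) = \arg\min_\theta \freqint \rho_p(\omega-\theta) F_Y(d\omega)$, and likewise the quantile \eqref{eq:2.2} is defined through the normalized measure. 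Hence it suffices to show that the random measure $\hat f_Y(\omega)\,d\omega$ converges, in a suitable weak sense, to a positive multiple of $F_Y(d\omega)$.

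For (a): set $S(\theta) = \freqint \rho_p(\omega-\theta) F_Y(d\omega)$ as in \eqref{eq:2.3} with the spectral distribution $F_Y$ of the sinusoid model \eqref{eq:4.8}. Since $\{Y_t\}$ is second order stationary with spectral distribution $F_Y$, Theorem \ref{thm:2.1} applies and shows that $\lambda_p$ is the unique minimizer of $S$, given by \eqref{eq:2.4}. I would then upgrade uniqueness to well-separation: $\theta\mapsto\rho_p(\omega-\theta)$ is convex and $F_Y$ is a nonnegative measure, so $S$ is convex and continuous on the compact interval $\Lambda$; a convex function on a compact interval with a unique minimizer satisfies $\inf_{\abs{\theta-\lambda_p}\geq\ep} S(\theta) > S(\lambda_p)$ for every $\ep>0$, which is exactly the separation needed.

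For (b): first record the lag-window identity that follows from \eqref{eq:4.9} after carrying out the inner integral, namely $\hat f_Y(\omega) = \sum_{\abs{h}\leq m}\phi(h/m) C_n^Y(h) e^{-ih\omega}$ (up to the overall constant discussed above), so that $S_n^*(\theta) = \sum_{\abs{h}\leq m}\phi(h/m) C_n^Y(h)\,\hat g_\theta(h)$ with $\hat g_\theta(h) = \freqint \rho_p(\omega-\theta) e^{-ih\omega}\,d\omega$; the coefficients $\hat g_\theta(h)$ are bounded and decay in $h$ uniformly in $\theta$. Uniform convergence then follows from two facts. The first is pointwise convergence $S_n^*(\theta)\plim c\,S(\theta)$ for each fixed $\theta$, which I obtain by writing $C_n^Y(h) = R_Y(h) + (C_n^Y(h)-R_Y(h))$: the stochastic part contributes a variance of order $m/n\to 0$, using consistency $C_n^Y(h)\plim R_Y(h)$ (itself a consequence of the cumulant-summability inherited from $\{X_t\}$ together with the averaging of the random-phase sinusoids), while the deterministic part $\sum_{\abs{h}\leq m}\phi(h/m)R_Y(h)\hat g_\theta(h)$ converges to $c\,S(\theta)$ by the window conditions of Assumption \ref{asp:4.3}. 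The second is stochastic equicontinuity: since $\rho_p$ is Lipschitz, $\abs{S_n^*(\theta_1)-S_n^*(\theta_2)} \leq \max(p,1-p)\,\abs{\theta_1-\theta_2}\,\norm{\hat f_Y}_1$ with $\norm{\hat f_Y}_1 = O_p(1)$, a standard bound for lag-window estimators. Pointwise convergence on a dense set, equicontinuity, and compactness of $\Lambda$ yield $\sup_{\theta\in\Lambda}\abs{S_n^*(\theta)-c\,S(\theta)}\plim 0$. Step (c) is then immediate: from $S_n^*(\hat\lambda_p^*)\leq S_n^*(\lambda_p)$ and the uniform bound one gets $c\,S(\hat\lambda_p^*)\plim c\,S(\lambda_p)=c\min_\theta S(\theta)$, and well-separation forces $\hat\lambda_p^*\plim\lambda_p$.

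The step I expect to be the main obstacle is the deterministic (bias) part of the pointwise convergence, because this is precisely where the line spectrum of \eqref{eq:4.8} enters. The distribution $F_Y$ carries atoms at the frequencies $\lambda_j$, and the smoothed periodogram does not converge pointwise near these lines — it develops growing spikes. One must instead show that, tested against the continuous function $\rho_p(\cdot-\theta)$, the smoothed line contribution $\sum_{\abs{h}\leq m}\phi(h/m)\tfrac12 R_j^2\cos(\lambda_j h)e^{-ih\omega}$ integrates to the correct atomic contribution of $F_Y$ at $\lambda_j$ — an approximate-identity (Dirichlet--Fej\'er kernel) argument driven by $\phi(0)=1$ and $\phi(h/m)\to 1$ — while Assumption \ref{asp:4.3}(v) governs the bias of the absolutely continuous part $f_X$. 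Establishing that both pieces of $F_Y$ are recovered simultaneously, uniformly enough in $\theta$, is the crux; the remaining steps are routine adaptations of the consistency proof of Theorem \ref{thm:3.2}.
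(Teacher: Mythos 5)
Your proposal is correct, and its outer M-estimation skeleton (pointwise limit of the objective, uniformity upgrade, well-separated argmin step) matches the paper's, but the route through each step is genuinely different. For the pointwise limit the paper stays in the frequency domain: writing $\hat{f}_Y$ as a spectral-window smoothing of $I_{n,Y}^*$, it splits $\abs{S_n^*(\theta)-S(\theta)}$ into the fluctuation $\hat{f}_Y-E\hat{f}_Y$ integrated against $\rho_p(\cdot-\theta)$, killed by the cumulant variance bound of Lemma \ref{lem:6.5} plus Chebyshev, and a bias term dispatched by citing Theorem 1.1 of \cite{hosoya1997} under Assumption \ref{asp:4.3}(v); you instead work in the time domain via the lag-window identity and the split $C_n^Y(h)=R_Y(h)+(C_n^Y(h)-R_Y(h))$, which is equivalent, more elementary and self-contained, though your variance claim of order $m/n$ needs either the decay $\hat{g}_\theta(h)=O(\abs{h}^{-1})$ or uniform $O(n^{-1})$ covariance bounds to go through (both are available). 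For the uniformity upgrade the paper reuses the convexity/Pollard argument from Theorem \ref{thm:3.2}, which is shorter but silently requires $\hat{f}_Y\geq 0$, i.e.\ a nonnegative spectral window (true for the Bartlett window used in Section \ref{sec:5}, not guaranteed by Assumption \ref{asp:4.3}); your Lipschitz-equicontinuity argument avoids that, at the cost that $\norm{\hat{f}_Y}_1=O_p(1)$ is itself immediate only for nonnegative windows (in general it can grow like $\log m$, which is harmless since your grid mesh may shrink slowly). Finally, you make explicit two points the paper glosses over: the scale invariance of the argmin (the paper's normalizations of $I_{n,Y}^*$ and $\hat{f}_Y$ are off by factors of $2\pi$, which your constant $c$ absorbs), and the approximate-identity step by which the smoothing recovers the atoms of $F_Y$ at the spectral lines. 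Indeed, the paper's displayed triangle inequality omits the third term comparing the window-smoothed $F_Y$ tested against $\rho_p$ with $S(\theta)$ itself, burying it in the citation of \cite{hosoya1997}; what you flag as the crux is exactly that term, and your Fej\'er-kernel argument (driven by $\phi(0)=1$, $\abs{\phi}\leq 1$, and the decay of the Fourier coefficients of the continuous function $\rho_p$) is a sound way to close it, so no step of your plan fails.
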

The consistency of the modified estimator \eqref{eq:4.10}
do not require the continuity of the spectral distribution function $F_Y(\omega)$,
which can be considered as a stronger result than Theorem \ref{thm:3.2}.
We can use the modified estimator $\hat{\lambda}_p^*$ in practice
as a method to test the hypothesis of sinusoid models since 
$F_Y(\omega)$ is uniquely determined by its autocovariance function $R_Y(h)$.

Let us introduce quantile tests in frequency domain for sinusoid models.
The hypothesis testing problem ($\star$) can be changed into a general testing problem
\begin{align*}
 H:& \ \hat{\lambda}_p^* = \lambda_p \\[0.25cm]
 &\text{versus}  \\[0.25cm]
 A:& \ \hat{\lambda}_p^* \not= \lambda_p.
\end{align*}
Here, we consider the asymptotic distribution of the estimator $\hat{\lambda}_p^*$.

\begin{assumption} \label{asp:4.1}
The spectral distribution function $F_Y(\omega)$ has a density $f_Y(\omega)$ in a neighborhood of $\lambda_p$
and $f_Y(\omega)$ is continuous at $\lambda_p$ with $0 < f_Y(\lambda_p) < \infty$.
\end{assumption}
This assumption is not so strong since the jump points in the distribution are countable at most.
It is possible to choose a proper quantile or multiple quantiles as our interest to implement the hypothesis testing.

The asymptotic distribution of the modified estimator $\hat{\lambda}^*_p$ is given below.
\begin{theorem} \label{thm:4.5}
Suppose $\{Y_t;\, t\in \Z\}$ is defined by \eqref{eq:4.8}.
The $p$th quantile $\lambda_p$ of the spectral distribution
of $\{Y_t\}$ is defined by \eqref{eq:2.2}. 
If $\hat{\lambda}_p^*$ is defined by \eqref{eq:4.10}, 
then we have
\begin{equation} \label{eq:4.11}
\sqrt{n} (\hat{\lambda}_p^* - \lambda_p ) \to_d \mathcal{N}(0, \sigma^2),  
\end{equation}
where
\begin{multline*}
 \sigma^2
 =
 f(\lambda_p)^{-2}
 \Bigl[
 \pi p^2 \int_{-\pi}^{\pi} \phi(\omega)^2 f_Y(\omega) f_X(\omega) d\omega\\
 + 
 2\pi(1-4p) \int_{-\pi}^{\lambda} \phi(\omega)^2 f_Y(\omega) f_X(\omega) d\omega \\
 +
 2\pi
 \Bigl \{
 \int_{-\pi}^{\lambda_p} \int_{-\pi}^{\lambda_p}
 \phi(\omega)^2 Q_X(\omega_1, \omega_2, -\omega_2) d\omega_1 d\omega_2\\
 +
 \freqint \freqint p^2
 \phi(\omega)^2 Q_X(\omega_1, \omega_2, -\omega_2) d\omega_1 d\omega_2\\
 -
 2p   \int_{-\pi}^{\lambda_p} \freqint
 \phi(\omega)^2 Q_X(\omega_1, \omega_2, -\omega_2) d\omega_1 d\omega_2
 \Bigr\}
 \Bigr].
\end{multline*}
\end{theorem}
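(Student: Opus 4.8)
The plan is to establish a Bahadur-type linear representation for $\hat{\lambda}_p^*$ in which, thanks to smoothing, the random ``denominator'' $\mathscr{E}$ of Theorem~\ref{thm:3.3} is replaced by the deterministic constant $f(\lambda_p)=f_Y(\lambda_p)$. First I would record the first-order condition. Since $\hat{f}_Y(\omega)=\sum_{|h|\le m}\phi(h/m)C_n^Y(h)e^{-ih\omega}$ is a continuous trigonometric polynomial and $\partial_\theta \rho_p(\omega-\theta)=\ind(\omega<\theta)-p$ almost everywhere, the map $\theta\mapsto S_n^*(\theta)$ is differentiable with derivative $\Psi_n(\theta):=\freqint\big(\ind(\omega<\theta)-p\big)\hat{f}_Y(\omega)\,d\omega$; because $\lambda_p$ is interior and $\hat{\lambda}_p^*$ is consistent (Theorem~\ref{thm:4.4}), any minimizer in \eqref{eq:4.10} satisfies $\Psi_n(\hat{\lambda}_p^*)=0$ with probability tending to one. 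Writing $w(\omega)=\ind(\omega<\lambda_p)-p$, this identifies $\hat{\lambda}_p^*$ as the normalized $p$-quantile of $\hat{F}_Y(\theta)=\int_{-\pi}^{\theta}\hat{f}_Y$, and the population version vanishes at $\lambda_p$ since $\int w\,dF_Y=F_Y(\lambda_p)-p\Sigma_Y=0$ by Assumption~\ref{asp:4.1} and \eqref{eq:2.2}.

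Next I would linearize. By Theorem~\ref{thm:4.4} I may localize to a shrinking neighborhood of $\lambda_p$ on which $\hat{f}_Y$ is uniformly consistent for the continuous density $f_Y$ (standard for the lag-window estimator under Assumption~\ref{asp:4.3}) and positive. Since only the $\theta$-dependent part of $\Psi_n$ varies,
\[
\Psi_n(\hat{\lambda}_p^*)-\Psi_n(\lambda_p)=\int_{\lambda_p}^{\hat{\lambda}_p^*}\hat{f}_Y(\omega)\,d\omega=f_Y(\lambda_p)\,(\hat{\lambda}_p^*-\lambda_p)\,\bigl(1+o_p(1)\bigr),
\]
because $\hat{f}_Y(\omega)\to f_Y(\lambda_p)$ in probability on the shrinking interval. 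Combined with $\Psi_n(\hat{\lambda}_p^*)=0$, this gives $\sqrt{n}\,(\hat{\lambda}_p^*-\lambda_p)=-f_Y(\lambda_p)^{-1}\sqrt{n}\,\Psi_n(\lambda_p)\,(1+o_p(1))$. This is exactly where the contrast with Theorem~\ref{thm:3.3} enters: the Hessian-type factor is the deterministic $f_Y(\lambda_p)$, not the exponential variable $\mathscr{E}$, so the limit is an honest Gaussian rather than $\mathscr{E}^{-2}\mathcal{N}$, and the factor $f(\lambda_p)^{-2}$ in front of $\sigma^2$ appears.

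It then remains to prove a central limit theorem for $\sqrt{n}\,\Psi_n(\lambda_p)=\sqrt{n}\freqint w(\omega)\hat{f}_Y(\omega)\,d\omega$ and to identify its variance with $f(\lambda_p)^2\sigma^2$. I would split off the deterministic bias $\sqrt{n}\,\mathbb{E}\Psi_n(\lambda_p)$ and show it is $o(1)$, using Assumption~\ref{asp:4.3} to control the tails $\sum_{|h|>m}$ and $\sum_{|h|\le m}(1-\phi(h/m))$ against the decay of the Fourier coefficients of $w$ times $R_Y(h)$. For the centered stochastic part, a weighted smoothed-periodogram mean, the CLT follows from the summable-cumulant structure as in Theorem~\ref{thm:3.3}. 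For the variance I would expand $w(\omega_1)w(\omega_2)=\ind_1\ind_2-p(\ind_1+\ind_2)+p^2$ and feed it through the fourth-order product/covariance structure of the smoothed periodogram: the trispectral term yields the three double integrals of $\phi(\cdot)^2Q_X$ with weights $1$, $-2p$, $p^2$, while the Gaussian term collapses the double frequency integral onto its diagonal, producing single integrals whose coefficients combine into $\pi p^2$ and $2\pi(1-4p)$.

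The \emph{main obstacle} is this last variance computation for $\{Y_t\}$, because $Y_t=S_t+X_t$ carries deterministic harmonics. The fluctuations of $C_n^Y(h)$ come from the noise--noise term (yielding the $f_X^2$ and $Q_X$ contributions, as in Theorem~\ref{thm:3.3}) together with the signal--noise cross terms $n^{-1}\sum_t S_tX_{t+h}$; regrouping these as $f_X^2+f_Sf_X=f_Yf_X$ is what produces the product form $f_Yf_X$ in the second-order blocks, where the product is to be read as the Stieltjes integral $\int\phi(\cdot)^2 f_X\,dF_Y$ so that the atoms of $F_Y$ at the $\lambda_j$ are correctly absorbed by the cross terms. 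One must also verify that the signal--signal term contributes only at order $n^{-1}$, the random phases averaging out, so that no spurious variance is added. Carrying the window weight $\phi$ through all these steps and confirming the bias is $o(n^{-1/2})$ is the delicate bookkeeping that completes \eqref{eq:4.11}.
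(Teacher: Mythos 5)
Your proof is correct in outline, but it reaches \eqref{eq:4.11} by a genuinely different route from the paper's. The paper localizes the objective function: it sets $M_n^*(\delta)=n\{S_n^*(\lambda_p-\delta/\sqrt{n})-S_n^*(\lambda_p)\}$, decomposes it by Knight's identity into a score term $M_{n1}^*(\delta)$, linear in $\delta$, and an integrated-indicator term $M_{n2}^*(\delta)$, proves $M_{n1}^*(\delta)\dlim-\delta\mathcal{N}(0,\tilde{\sigma}^2)$ (variance identified through Lemma \ref{lem:6.5}) and $M_{n2}^*(\delta)\plim\tfrac{1}{2}f_Y(\lambda_p)\delta^2$, and then carries the limit to the argmin, $\delta=f_Y(\lambda_p)^{-1}\mathcal{N}$. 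You instead exploit the differentiability of $S_n^*$ --- available precisely because of smoothing --- to write the first-order condition $\Psi_n(\hat{\lambda}_p^*)=0$ and invert it by a Bahadur-type linearization. The stochastic ingredients are identical: your $\sqrt{n}\,\Psi_n(\lambda_p)$ is, up to sign, the paper's $M_{n1}^*(\delta)/\delta$, and your linearization $\int_{\lambda_p}^{\hat{\lambda}_p^*}\hat{f}_Y(\omega)\,d\omega=f_Y(\lambda_p)(\hat{\lambda}_p^*-\lambda_p)(1+o_p(1))$ is the same local uniform consistency of $\hat{f}_Y$ near $\lambda_p$ that underlies the paper's quadratic limit. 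What each route buys: yours avoids the argmin continuous-mapping step, which as written in the paper implicitly needs convexity of $S_n^*$ (i.e.\ $\hat{f}_Y\geq 0$, true for the Bartlett window but not guaranteed for every window satisfying Assumption \ref{asp:4.3}) or a uniform-in-$\delta$ strengthening of the convergences, at the price of requiring an interior critical point, which Theorem \ref{thm:4.4} and the interiority of $\lambda_p$ supply; the paper's route runs exactly parallel to Theorem \ref{thm:3.3}, which makes visible where smoothing changes the answer (the quadratic term's limit becomes the constant $f_Y(\lambda_p)$ rather than the exponential variable $\mathscr{E}$), and it would survive even without differentiability of the objective. You also explicitly flag two points the paper glosses over: Lemma \ref{lem:6.5} supplies only a variance computation (its proof is Chebyshev, hence consistency), so asymptotic normality of the linear term still requires a cumulant-based CLT as invoked in Theorem \ref{thm:3.3}; and the bias $\sqrt{n}\,E\Psi_n(\lambda_p)=o(1)$ is genuinely delicate under the alternative, since $R_Y(h)$ does not decay there, and controlling it may require a rate link between $m$ and $n$ beyond Assumption \ref{asp:4.3}(i) --- a gap your write-up shares with, rather than adds to, the paper's own proof.
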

Theorem \ref{thm:4.5} holds for sinusoid models so it also can be applied to the null hypothesis.

Let us introduce the testing procedure for the quantile problem above.
From Theorem \ref{thm:4.5}, we have the following result.
Let $\mu_p$ be the $p$th quantile of the spectral distribution of $\{Y_t\}$ in the alternative hypothesis.

\begin{corollary} \label{cor:4.5}
Suppose $\{Y_t;\, t\in \Z\}$ is defined by \eqref{eq:4.8}.
The $p$th quantile $\lambda_p$ of the spectral distribution
of $\{Y_t\}$ is defined by \eqref{eq:2.2}
and $\hat{\lambda}_p^*$ is defined by \eqref{eq:4.10}. From \eqref{eq:4.11},
\begin{enumerate}[{\rm (i)}]
 \item Under the null hypothesis $H$, $\sqrt{n} (\hat{\lambda}_p^* - \lambda_p )/\sigma \dlim \mathcal{N}(0, 1)$;
 \item Under the alternative hypothesis $A$, $\sqrt{n} (\hat{\lambda}_p^* - \lambda_p )/\sigma  
 \dlim \mathcal{N}(\mu_p - \lambda_p, 1)$.
\end{enumerate}
\end{corollary}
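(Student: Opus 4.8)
The plan is to derive both parts directly from Theorem \ref{thm:4.5} together with Slutsky's theorem, exploiting the fact that under either hypothesis the process $\{Y_t\}$ is of the sinusoid form \eqref{eq:4.8}, to which Theorem \ref{thm:4.5} applies, and that $\sigma$ is a fixed positive deterministic constant.

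First I would treat part (i). Under the null hypothesis $H$ we have $Y_t = X_t$, so the spectral distribution function of $\{Y_t\}$ coincides with $F_X(\omega)$ and its $p$th quantile is exactly $\lambda_p$. Theorem \ref{thm:4.5} then gives $\sqrt{n}(\hat{\lambda}_p^* - \lambda_p) \dlim \mathcal{N}(0, \sigma^2)$. Since $\sigma$ is strictly positive under Assumption \ref{asp:4.1} (the variance formula in Theorem \ref{thm:4.5} carries the factor $f(\lambda_p)^{-2}$, finite and nonzero because $0 < f_Y(\lambda_p) < \infty$), dividing by $\sigma$ is a continuous deterministic scaling, and Slutsky's theorem yields $\sqrt{n}(\hat{\lambda}_p^* - \lambda_p)/\sigma \dlim \mathcal{N}(0,1)$.

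For part (ii) the key observation is that under the alternative $A$ the process $\{Y_t\}$ is again a sinusoid model of the form \eqref{eq:4.8}, now with at least one nonzero amplitude, so Theorem \ref{thm:4.5} applies verbatim, but with the \emph{true} $p$th quantile being $\mu_p$, the quantile of $F_Y$ rather than of $F_X$. Hence $\sqrt{n}(\hat{\lambda}_p^* - \mu_p) \dlim \mathcal{N}(0, \sigma^2)$. I would then decompose the test statistic as
\[
\frac{\sqrt{n}(\hat{\lambda}_p^* - \lambda_p)}{\sigma}
=
\frac{\sqrt{n}(\hat{\lambda}_p^* - \mu_p)}{\sigma}
+
\frac{\sqrt{n}(\mu_p - \lambda_p)}{\sigma},
\]
where the first summand converges in law to $\mathcal{N}(0,1)$ by the above, and the second is a deterministic centering term measuring the separation of the alternative from the null. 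Combining the two by Slutsky's theorem produces the shifted Gaussian limit, which is the source of the test's power.

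The main subtlety, and the step that needs care in the writeup, is the interpretation of the centering term $\sqrt{n}(\mu_p - \lambda_p)/\sigma$. For a fixed alternative with $\mu_p \neq \lambda_p$ this term diverges, so the statistic diverges and the test is consistent with asymptotic power one; the finite non-centrality $\mu_p - \lambda_p$ displayed in the corollary is the appropriate limit read along a sequence of local (Pitman) alternatives for which $\sqrt{n}(\mu_p - \lambda_p)/\sigma$ stabilizes to a constant, and I would make this distinction explicit. A secondary point to verify is that Assumption \ref{asp:4.1} holds at the relevant quantile under the alternative, i.e.\ that $\lambda_p$ and $\mu_p$ avoid the jump locations $\{\lambda_j\}$ of $F_Y$, so that the density $f_Y$ exists, is positive and continuous there; this guarantees $\sigma > 0$ and legitimizes the application of Theorem \ref{thm:4.5} in both parts.
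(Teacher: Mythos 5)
Your proposal is correct and follows essentially the same route the paper takes: the paper offers no separate proof of Corollary \ref{cor:4.5}, treating it as an immediate consequence of \eqref{eq:4.11} (Theorem \ref{thm:4.5}) applied under each hypothesis, followed by the deterministic rescaling by $\sigma$ --- exactly your Slutsky argument. Your additional care with part (ii) is warranted and in fact exposes a looseness in the paper's own statement: for a fixed alternative the term $\sqrt{n}(\mu_p - \lambda_p)/\sigma$ diverges (giving power one), so the displayed noncentrality $\mu_p - \lambda_p$ is only meaningful as the limit of $\sqrt{n}(\mu_p - \lambda_p)/\sigma$ along local (Pitman) alternatives, a distinction the paper never makes but your writeup should retain.
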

The hypothesis is rejected if $\sqrt{n}\abs{\hat{\lambda}_p^* - \lambda_p}/\sigma > \Phi_{1 - \alpha/2}$,
where $\Phi_{1 - \alpha/2}$ is the $1 - \alpha/2$ percentage point of a standard normal distribution.

\section{Numerical Studies} \label{sec:5}
In this section, we implement the numerical studies to confirm the theoretical results in
Sections \ref{sec:3} and \ref{sec:4}.

\subsection{Numerical results for estimator $\hat{\lambda}_p$} \label{subsec:5.1}
First, we focus on the consistency of the estimator $\hat{\lambda}_p$
defined by \eqref{eq:2.7}.
Second order stationary processes considered here are
Gaussian white noise model, Gaussian MA(1) process with coefficient 0.9,
Gaussian AR(1) model with coefficient 0.9 and  Gaussian AR(1) model with coefficient -0.9.
The spectral distribution functions for these four models are given in Figure \ref{fig:1}.
The dependence structures of them are obviously different.

We estimated the quantile $\lambda_p$ of the spectral distribution function
by 30 samples, generated from each Gaussian stationary process.
The numerical results of the estimator $\hat{\lambda}_p$ only for $0.5 \leq p \leq 1$
are listed in Table \ref{tbl:1},
since the spectral distribution functions of real-valued stationary processes are symmetric.

\begin{table}[H]
\caption{the estimated quantiles $\hat{\lambda}_p$ of the spectral distribution with 30 samples}
\begin{center}
\begin{tabular}{|c||c|c|c|c|}
\hline
$p$ & White noise & MA(1) & AR(1) with 0.9 & AR(1) with -0.9 \\\hline
0.5 & 0.000 & 0.000 & 0.000 & 0.000 \\
0.6 & 0.305 & 0.211 & 0.026 & 2.940\\
0.7 & 1.187 & 0.576 & 0.055 & 3.030\\
0.8 & 1.564 & 0.891 & 0.092 & 3.074\\
0.9 & 2.093 & 1.235 & 0.190 & 3.109\\
1.0 & 3.142 & 3.142 & 3.142 & 3.142 \\ \hline
\end{tabular}
\end{center}
\label{tbl:1}
\end{table}%
We can see that the results in Table \ref{tbl:1} correspond to Figure \ref{fig:1} in Section \ref{sec:2}.
That is to say, the quantile of the spectral distribution function reflects the traits
of stationary processes.
Furthermore, we can make use of $\hat{\lambda}_p$ to seize the traits.

In general asymptotic theory,
if the estimator is asymptotically normal, 
then the estimates will be improved when the sample sizes get large.
However, as what we have shown in Section \ref{sec:3},
the estimator $\hat{\lambda}_p$ based on the bare periodogram is not asymptotically normal.
We next give the results in the white noise case with different sample size
to see the phenomenon.
The sample sizes are set to be 30, 50, 100 and 200.

\begin{table}[H]
\caption{the estimated quantiles $\hat{\lambda}_p$ in white noise case with different numbers of samples}
\begin{center}
\begin{tabular}{|c||c|c|c|c|}
\hline
$p \verb+\+\ n $ & 30 & 50 & 100 & 200 \\\hline
0.5 & 0.000 & 0.000 & 0.000 & 0.000 \\
0.6 & 0.305 & 0.663 & 0.366 & 0.745\\
0.7 & 1.187 & 1.226 & 0.966 & 1.260\\
0.8 & 1.564 & 1.990 & 1.602 & 1.881\\
0.9 & 2.093 & 2.440 & 2.251 & 2.334\\
1.0 & 3.142 & 3.142 & 3.142 & 3.142 \\ \hline
\end{tabular}
\end{center}
\label{tbl:2}
\end{table}%

From Table \ref{tbl:2}, 
we can see the accuracy is not quite improved when the sample size gets large.
This numerical result supports the theoretical results given in Theorem \ref{thm:3.3} 
in Section \ref{sec:3}, since, not only a normal distribution inside the asymptotic distribution
of $\hat{\lambda}_p$, the asymptotic distribution is also influenced by 
exponential distributed random variable.

At last, we would like to look at the behavior of the estimator $\hat{\lambda}_p$ for sinusoid models.
In addition to the same settings of $X_t$ given above, 
we add a harmonic component $m_t$ in the model with $\omega_0 = \pi/2$, i.e.
\begin{equation} \label{eq:5.12}
Y_t = m_t + X_t,
\end{equation}
where $m_t$ is defined in the following way: with uniformly distributed $\phi$ on $[-\pi, \pi]$
\[
m_t = 1/2 \cos (\omega_0 \, t + \phi).
\]

As already known, the spectral distribution function of $\{Y_t\}$ 
has a large change at the certain frequency 
$\omega_0 = \pi/2$.
Still, we estimated the quantile $\lambda_p$ by 30 samples, generated 
from the sinusoid models \eqref{eq:5.12}.
Compared with the results in Table \ref{tbl:1},
we can see that the estimated quintiles are pulled around to the frequency $\omega_0$
from Table \ref{tbl:3}.
Accordingly, even in the sinusoid models, 
the quantile $\lambda_p$ shows the phase of the spectral distribution function.
We can grasp them from the quantile estimator $\hat{\lambda}_p$.

\begin{table}[H]
\caption{the estimated quantiles $\hat{\lambda}_p$ of the spectral distribution with 30 samples}
\begin{center}
\begin{tabular}{|c||c|c|c|c|}
\hline
$p$ & White noise & MA(1) & AR(1) with 0.9 & AR(1) with -0.9 \\\hline
0.5 & 0.000 & 0.000 & 0.000 & 0.000 \\
0.6 & 1.399 & 0.412 & 0.030 & 2.610\\
0.7 & 1.513 & 0.789 & 0.065 & 3.014\\
0.8 & 1.577 & 1.254 & 0.116 & 3.066\\
0.9 & 1.679 & 1.582 & 1.152 & 3.106\\
1.0 & 3.142 & 3.142 & 3.142 & 3.142 \\ \hline
\end{tabular}
\end{center}
\label{tbl:3}
\end{table}%


\subsection{Statistical power of quantile tests in frequency domain}
Next, we implement quantile tests in frequency domain to see the performance of our testing 
procedure.
The Bartlett window is used for our purpose to smooth the periodogram \eqref{eq:2.6}.
To know the quantile $\lambda_p$ for each model is very difficult,
so we fixed $p = 0.7$ and $p = 0.8$ and numerically calculated $\lambda_p$ in advance.

Also, the theoretical result of the asymptotic variance $\sigma^2$ is also difficult to calculate.
We used the unbiased variance $\hat{\sigma}$ of the estimator in 100 simulations.
The significant level $\alpha$ is set to be $0.1$.

We set $\lambda_p$ as the true quantile for the null hypothesis.
Under the alternative models (Gaussian white noise model, 
Gaussian MA(1) model,
Gaussian AR(1) models as before),
50 samples are generated to estimate the quantile by the estimator $\hat{\lambda}_p^*$.
\begin{table}[H]
\caption{Statistical power of quantile tests for $\lambda_{0.7}$ with 50 samples}
\begin{center}
\begin{tabular}{|c||c|c|c|c|}
\hline
H $\backslash$ A & White noise & MA(1) & AR(1) with 0.9 & AR(1) with -0.9 \\\hline
White noise & -- & 0.99 & 1.00 & 1.00 \\ \hline
MA(1) & 1.00 & -- & 0.99 & 1.00\\ \hline
AR(1) with 0.9 & 1.00 & 1.00 & -- & 1.00\\ \hline
AR(1) with -0.9 & 1.00 & 1.00 &  1.00 & --\\ \hline
\end{tabular}
\end{center}
\label{tbl:4}
\end{table}%

\begin{table}[H]
\caption{Statistical power of quantile tests for $\lambda_{0.8}$ with 50 samples}
\begin{center}
\begin{tabular}{|c||c|c|c|c|}
\hline
H $\backslash$ A & White noise & MA(1) & AR(1) with 0.9 & AR(1) with -0.9 \\\hline
White noise & -- & 1.00 & 1.00 & 1.00 \\ \hline
MA(1) & 1.00 & -- & 0.99 & 1.00\\ \hline
AR(1) with 0.9 & 1.00 & 1.00 & -- & 1.00\\ \hline
AR(1) with -0.9 & 1.00 & 1.00 &  1.00 & --\\ \hline
\end{tabular}
\end{center}
\label{tbl:5}
\end{table}%

As what we can see from both Tables \ref{tbl:4} and \ref{tbl:5}, the statistical power is much high.
One reason to explain this result is that the dependence structures of these four models are quite different.
When $p$ is closer to $0.5$ or $1$, or the dependence structures of models are more similar, 
then the statistical power will be lower.

\section{Proofs of Theorems}
In this section, we provide proofs of theorems in the previous sections.
\begin{proof}[Theorem \ref{thm:2.1}]
First, we confirm the existence of the minimizer of $S(\theta)$.
The right derivative of $S(\theta)$ is
\[
S_+'(\theta) \equiv \lim_{\ep \to +0} \frac{S(\theta + \ep) - S(\theta)}{\ep} = F_Y(\theta) - p \Sigma_Y.
\]
From \eqref{eq:2.2}, we have 
\[
S_+'(\theta) 
\begin{cases}
 < 0, & \text{for $\theta < \lambda_p$},\\
 \geq 0, & \text{for $\theta \geq \lambda_p$}.
\end{cases}
\]
Thus, the minimizer of $S(\theta)$ exists and $S(\lambda_p) = \min_{\theta \in \Lambda} S(\theta)$.
The uniqueness of $\lambda_p$ and
the representation \eqref{eq:2.4} follow \eqref{eq:2.2}. 
\end{proof}

\begin{proof}[Theorem \ref{thm:3.2}]
Let $m$ be the minimum of $S(\theta)$.
The convexity of $S_n(\theta)$ is shown by the positiveness of 
the second derivative of $S_n(\theta)$, i.e.,
\begin{equation*}
 \fp{\theta}{2} S_n(\theta) = I_{n, X}(\theta) > 0 \quad \text{a.s.}
\end{equation*}
Now, let us consider the pointwise limit of $S_n(\theta)$. Actually, for each $\theta \in \Lambda$,
\begin{multline*}
\abs{S_n(\theta) - S(\theta)}
\leq
\Babs{\freqint \rho_p(\omega - \theta) (I_{n, X}(\omega) - E I_{n, X}(\omega)) d\omega} \\
+
\Babs{\freqint \rho_p(\omega - \theta) (E I_{n, X}(\omega) - f_X(\omega)) d\omega}.
\end{multline*}
The first term in right hand side converges to 0 in probability,
which can be shown by the summability of the fourth order cumulants under Assumption \ref{asp:3.1} (i).
The second term in right hand side converges to 0 under Assumption \ref{asp:3.1} (ii).
(See \cite{hannan1970, ht1982}).
By the Convexity Lemma in \cite{pollard1991},
\begin{equation} \label{eq:6.13}
 \sup_{\theta \in K} \abs{S_n (\theta) - S(\theta)} \plim 0,
\end{equation}
for any compact subset $K \subset \Lambda$. 

Let $B(\lambda_p)$ be any open neighborhood of $\lambda_p$.
From the uniqueness of zero of $S(\theta)$,
there exists an $\epsilon > 0$ 
such that $\inf_{\mu \in \Lambda/B(\lambda) } \abs{S(\mu)} > m + \epsilon$.
Thus, with probability tending to 1,
\[
\inf_{\theta \in \Lambda/B(\lambda_p) } S_n(\theta)
\geq
\inf_{\theta \in \Lambda/B(\lambda_p) } S(\theta)
-
\sup_{\theta \in \Lambda/B(\lambda_p)} \abs{S(\theta) - S_n(\theta)} > m,
\]
where it is implied by \eqref{eq:6.13} that the second term can be chosen arbitrarily small.
The conclusion follows that with probability tending to 1, $S_n(\hat{\lambda}_p) \leq m - \epsilon^*$ 
by the pointwise convergence of $S_n(\theta)$ in probability.

\end{proof}

To prove Theorem \ref{thm:3.3}, we first consider asymptotic variance of
\begin{equation} \label{eq:6.14}
  T_n(\lambda) \equiv n^{\beta} \int_{\lambda}^{\lambda + n^{-\beta}} I_{n, X}(\omega) d\omega.
\end{equation}
The asymptotic variance can be classified as the following lemma.
\begin{lemma} \label{lem:6.2}
Suppose $\{X(t)\}$ satisfies Assumption \ref{asp:3.2}. Let $T_n(\lambda)$ be defined as \eqref{eq:6.14}.
Then the asymptotic variance of $T_n(\lambda)$ is given by
  \[
  \lim_{n \to \infty}\var(T_n(\lambda))
  = 
\begin{cases}
 0, & \quad \text{if $\beta < 1$},\\
 f_X(\lambda)^2, & \quad \text{if $\beta = 1$}, \\
 \infty, & \quad \text{if $\beta > 1$}.
\end{cases}
  \]
\end{lemma}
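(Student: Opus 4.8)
The plan is to reduce the variance of $T_n(\lambda)$ to a double integral of the periodogram covariance and then to read off the three regimes from the competition between the normalizing factor $n^{2\beta}$, the interval length $n^{-\beta}$, and the $n^{-1}$ concentration scale of the Fejér kernel. By \eqref{eq:6.14} and Fubini,
\begin{equation*}
\var(T_n(\lambda)) = n^{2\beta}\int_\lambda^{\lambda+n^{-\beta}}\int_\lambda^{\lambda+n^{-\beta}} \cov\bigl(I_{n,X}(\omega_1), I_{n,X}(\omega_2)\bigr)\, d\omega_1\, d\omega_2,
\end{equation*}
so everything rests on the second--order moment structure of the periodogram. I would introduce the normalized Fejér kernel $K_n(u) = n^{-2}\abs{\sum_{t=1}^n e^{itu}}^2$, which satisfies $K_n(0)=1$, $\freqint K_n(u)\,du = 2\pi/n$, and concentrates at the origin on scale $1/n$.

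The next step is to invoke the Brillinger--type expansion of the periodogram covariance, which under Assumption \ref{asp:3.2} holds uniformly for $\omega_1,\omega_2$ near $\lambda$:
\begin{equation*}
\cov(I_{n,X}(\omega_1), I_{n,X}(\omega_2)) = f_X(\omega_1) f_X(\omega_2)\bigl\{K_n(\omega_1-\omega_2) + K_n(\omega_1+\omega_2)\bigr\} + \tfrac{2\pi}{n} Q_X(\omega_1,-\omega_1,-\omega_2) + r_n,
\end{equation*}
with remainder $r_n = o(1/n)$ uniformly. For $\lambda\neq 0,\pm\pi$ the term $K_n(\omega_1+\omega_2)$ stays bounded (its argument sits near $2\lambda\neq 0$) and the fourth--order cumulant term is $O(1/n)$; after multiplying by $n^{2\beta}$ and integrating over a set of area $n^{-2\beta}$, both contribute $O(1/n)\to 0$. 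Hence only the diagonal Fejér term survives, and by continuity of $f_X$ at $\lambda$ one may factor out $f_X(\lambda)^2$, reducing the problem to the leading spectral--mean variance $\var\bigl(\int_I I_{n,X}\bigr) \sim \tfrac{C}{n}\int_I f_X(\omega)^2\,d\omega$ on the shrinking window $I=[\lambda,\lambda+n^{-\beta}]$.

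The heart of the proof is the one--dimensional reduction. Using $\int_I\int_I g(\omega_1-\omega_2)\,d\omega_1 d\omega_2 = \int_{-L}^{L} g(u)(L-\abs{u})\,du$ with $L=n^{-\beta}$, the leading contribution is
\begin{equation*}
\var(T_n(\lambda)) \sim f_X(\lambda)^2\, n^{2\beta}\int_{-n^{-\beta}}^{n^{-\beta}} K_n(u)\,(n^{-\beta}-\abs{u})\,du,
\end{equation*}
and the change of variables $u=s/n$ sends $K_n(s/n)$ to its limit $\bigl(\sin(s/2)/(s/2)\bigr)^2$ while exposing the decisive parameter $n^{1-\beta}$, the number of kernel widths spanned by the window. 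Equivalently, since $\int_I f_X^2 \sim f_X(\lambda)^2 n^{-\beta}$, the leading variance scales as $f_X(\lambda)^2\,n^{\beta-1}$. The trichotomy is then exactly the trichotomy of $n^{1-\beta}$: for $\beta<1$ the window averages over many kernel widths and the variance vanishes; for $\beta=1$ it spans $O(1)$ widths and the limiting kernel integral yields the finite value $f_X(\lambda)^2$; for $\beta>1$ the normalization $n^{2\beta}$ outpaces the mass captured by the ever--thinner window and the variance diverges.

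The main obstacle is twofold. First, the uniform control of the remainder $r_n$ and of the replacement of $f_X(\omega_1)f_X(\omega_2)$ by $f_X(\lambda)^2$: because these errors are subsequently multiplied by $n^{2\beta}$, only a genuinely uniform $o(1/n)$ bound suffices, and this is precisely where the $\abs{u_j}$--weighted cumulant summability of Assumption \ref{asp:3.2} is needed, as it bounds both the modulus of continuity of $f_X$ and the error in the Fejér approximation on the shrinking interval. Second, the boundary case $\beta=1$ is the delicate one, since the kernel does not collapse to a point mass there; one must retain the exact limiting kernel integral to extract the constant $f_X(\lambda)^2$ rather than a mere order of magnitude. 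By contrast the cases $\beta<1$ and $\beta>1$ are comparatively soft, following directly from the total--mass relation $\freqint K_n = 2\pi/n$ together with the scaling of $n^{\beta-1}$.
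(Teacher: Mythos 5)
Your route is genuinely different from the paper's: the paper writes $T_n(\lambda)$ as the difference of two spectral means over $[-\pi,\lambda+n^{-\beta}]$ and $[-\pi,\lambda]$ and plugs the fixed-interval variance/covariance formulas for integrated periodograms into that decomposition, whereas you work directly with the Fej\'er-kernel expansion of $\cov(I_{n,X}(\omega_1),I_{n,X}(\omega_2))$ on the shrinking square. In principle yours is the sharper method --- but your case analysis contains a genuine error in the regime $\beta>1$, which is exactly the regime your reduction is best placed to resolve. Carry the reduction out: with $L=n^{-\beta}$ and the substitution $u=s/n$,
\begin{equation*}
n^{2\beta}\int_{-L}^{L}K_n(u)\,(L-\abs{u})\,du
=
n^{2\beta-2}\int_{-M}^{M}K_n(s/n)\,(M-\abs{s})\,ds,
\qquad M=n^{1-\beta}.
\end{equation*}
For $\beta>1$ we have $M\to0$, and for $\abs{s}\le M$ the kernel satisfies $K_n(s/n)=1+o(1)$ (it is flat at scales below its width $1/n$), so the right-hand side equals $n^{2\beta-2}M^{2}(1+o(1))=1+o(1)$, and your leading term tends to the \emph{finite} value $f_X(\lambda)^2$, not to $\infty$. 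Probabilistically this is transparent: for $\beta>1$ the window is much shorter than $1/n$, the periodogram is essentially constant on it, so $T_n(\lambda)\approx I_{n,X}(\lambda)$, whose asymptotic variance is $f_X(\lambda)^2$. Your concluding claim that ``the normalization $n^{2\beta}$ outpaces the mass captured \dots and the variance diverges'' rests on the scaling $\var(T_n)\sim f_X(\lambda)^2\,n^{\beta-1}$, i.e.\ on the fixed-interval relation $\var\bigl(\int_I I_{n,X}\bigr)\sim (C/n)\int_I f_X^2$; but that relation holds only when $I$ spans at least one kernel width (i.e.\ $\beta\le 1$), and for $\beta>1$ it is contradicted by the very kernel computation you set up. (The paper's proof applies the fixed-interval formulas in all three regimes and so reaches its stated divergence; the immediate point here is that your displayed reduction and your stated conclusion for $\beta>1$ are inconsistent with each other, so the case $\beta>1$ is not proved by your argument.)

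There is a second gap at the boundary case $\beta=1$, which you flag as delicate but then do not complete: your limit there is $f_X(\lambda)^2\int_{-1}^{1}\bigl(\sin(s/2)/(s/2)\bigr)^{2}(1-\abs{s})\,ds$, and this integral is strictly less than $1$ (numerically about $0.98$), since the integrand is bounded by $1-\abs{s}$ with strict inequality away from $s=0$. So ``the limiting kernel integral yields the finite value $f_X(\lambda)^2$'' is an assertion, not a computation; an honest evaluation of your own expression gives a different constant, and you would need to exhibit the computation that produces exactly $f_X(\lambda)^2$ if you want the constant in the lemma. A minor further point: ``$K_n(\omega_1+\omega_2)$ stays bounded'' is not sufficient after the $n^{2\beta}\times n^{-2\beta}$ cancellation, because a bounded term then contributes $O(1)$, not $o(1)$; what you need, and what is true for $\lambda\ne 0,\pm\pi$, is that $K_n(\omega_1+\omega_2)=O(n^{-2})$ uniformly on the square, since its argument stays away from $0$ modulo $2\pi$.
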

\begin{proof}
Let $a_n = n^{\beta}$.
Divide $T_n(\lambda)$ by
\[
a_n \int_{-\pi}^{\lambda + a_n^{-1}} I_{n, X}(\omega) d\omega
-
a_n \int_{-\pi}^{\lambda} I_{n, X}(\omega) d\omega.
\]
The variances of both two parts and their covariance are given by 
\begin{multline*}
 \var\Bigl( a_n \int_{-\pi}^{\lambda + a_n^{-1}} I_{n, X}(\omega) d\omega\Bigr)
 =\\
\frac{a_n^2}{n}\,
 2\pi
\Bigl(
\int_{-\pi}^{\lambda+a_n^{-1}} f_X(\omega)^2 d\omega
+
\int_{-\pi}^{\lambda+a_n^{-1}}
\int_{-\pi}^{\lambda+a_n^{-1}} 
Q_X(\omega_1, \omega_2, -\omega_2) d\omega_1 d\omega_2
\Big),
\end{multline*}
\begin{multline*}
 \var\Bigl( a_n \int_{-\pi}^{\lambda} I_{n, X}(\omega) d\omega\Bigr)
 =\\
\frac{a_n^2}{n} \,
 2\pi
\Bigl(
\int_{-\pi}^{\lambda} f_X(\omega)^2 d\omega
+
\int_{-\pi}^{\lambda}
\int_{-\pi}^{\lambda} 
Q_X(\omega_1, \omega_2, -\omega_2) d\omega_1 d\omega_2
\Big),
\end{multline*}
and
\begin{multline*}
 \cov\Bigl(a_n \int_{-\pi}^{\lambda + a_n^{-1}} I_{n, X}(\omega) d\omega,
 a_n \int_{-\pi}^{\lambda} I_{n, X}(\omega) d\omega\Bigr)\\
 =
\frac{a_n^2}{n}\,
 2\pi
\Bigl(
\int_{-\pi}^{\lambda} f_X(\omega)^2 d\omega
+
\int_{-\pi}^{\lambda}
\int_{-\pi}^{\lambda+a_n^{-1}} 
Q_X(\omega_1, \omega_2, -\omega_2) d\omega_1 d\omega_2
\Big).
\end{multline*}
As a result, the variance of $T_n(\lambda)$ is
\begin{multline} \label{eq:6.15}
\var(T_n(\lambda))
=
 \frac{a_n^2}{n}\,
 2\pi
\Bigl(
\int_{\lambda}^{\lambda+a_n^{-1}} f_X(\omega)^2 d\omega\\
+
\int_{\lambda}^{\lambda+a_n^{-1}}
\int_{-\pi}^{\lambda+a_n^{-1}} 
Q_X(\omega_1, \omega_2, -\omega_2) d\omega_1 d\omega_2\\
-
\int_{-\pi}^{\lambda}
\int_{\lambda}^{\lambda+a_n^{-1}} 
Q_X(\omega_1, \omega_2, -\omega_2) d\omega_1 d\omega_2
\Big).
\end{multline}
We can see the result from \eqref{eq:6.15} by cases:
\begin{enumerate}[(i)]
 \item if $a_n = n^{\beta}$ where $0 < \beta < 1$, then the limiting variance of $T_n(\lambda)$ is
\[
\var(T_n(\lambda))
\to
0,
\]
\item if $a_n = n^{\beta}$ where $\beta > 1$, then the limiting variance of $T_n(\lambda)$ is
\[
\var(T_n(\lambda)) \to \infty,
\]
\item if $a_n = n^{\beta}$ where $\beta = 1$, then the limiting variance of $T_n(\lambda)$ is
\[
\var(T_n(\lambda)) \to  f_X(\lambda)^2.
\]
\end{enumerate}
Thus, the conclusion holds.

\end{proof}

\begin{remark}
The result in Lemma \ref{lem:6.2} seems surprising at first glance,
since it may be expected that \eqref{eq:6.14} do not depend on the order of factor $n^{\beta}$.
However, the phenomenon can be explained in a heuristic way. 
Returning back to the definition of $T_n(\lambda)$,
the quantity
\[
\int_{\lambda}^{\lambda + n^{-\beta}} I_{n, X}(\omega) d\omega
\]
is approximated by the following discrete statistic
\begin{equation} \label{eq:6.16}
\frac{2\pi}{n} 
\sum_{\lambda \leq 2\pi s/n \leq \lambda + n^{-\beta}} 
I_{n, X}
\Bigl(
\frac{2\pi s}{n}
\Bigr).
\end{equation}
Looking at the number of periodograms $I_{n, X}(\lambda_s)$ with different frequencies,
we can find that \eqref{eq:6.16} depends on the order of $n^{-\beta}$.
If $0 < \beta < 1$, then more and more periodograms will be involved in the summation
as $n$ increases.
Conversely, if $\beta > 1$,
then the interval for the frequency will be much smaller as $n$ increases.
Only the case $\beta = 1$ keeps the same order between the number of periodograms
and the length of the interval, and therefore only one periodogram
$I_{n, X}(2\pi s/n)$ is involved in the summation.
\end{remark}

Next, we have to consider the domain of periodogram on the lattice as in
\cite{brillinger2001}.
That is to say,
for any $\omega \in [-\pi, \pi]$, define periodogram $I_{n, X}(\omega)$ discretely by $I_{n, X}(\omega_k)$,
where $\omega_k$ is defined as the closest frequency of the multiple of $2\pi/n$.
It is easy to see that
\[
\abs{I_{n, X}(\omega) - I_{n, X}(\omega_k)} = o_p(1).
\]

\begin{lemma} \label{lem:6.3}
If $\omega_k \not= -\pi, 0, \pi$, then the random vector 
 \[
 \sqrt{n} \,\, \Bigl(
 \frac{1}{n}\sum_{t=1}^n X_t \cos(\omega_k t), 
 \frac{1}{n}\sum_{t=1}^n X_t \sin(\omega_k t) \Bigr)'
 \]
 has a joint asymptotic normal distribution with the covariance matrix $1/2\,\Sigma_X I_2$.
\end{lemma}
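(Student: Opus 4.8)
The plan is to establish the bivariate central limit theorem by the method of cumulants, in the spirit of \cite{brillinger2001}. Abbreviate the two coordinates as
\[
C_n = \frac{1}{\sqrt{n}}\sum_{t=1}^{n} X_t \cos(\omega_k t),
\qquad
S_n = \frac{1}{\sqrt{n}}\sum_{t=1}^{n} X_t \sin(\omega_k t),
\]
so the object of interest is $(C_n, S_n)'$. Both coordinates are linear functionals of $\{X_t\}$, and since $E X_t = 0$ the first-order cumulants (the means) vanish identically. The remaining work splits into two parts: compute the limiting $2\times 2$ covariance matrix, and show that every joint cumulant of $(C_n, S_n)'$ of order $r \geq 3$ tends to $0$. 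A limit law whose first cumulant is $0$, whose second cumulant is the stated matrix, and all of whose higher cumulants vanish is Gaussian, so these two ingredients finish the proof.

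For the covariance I would expand
\[
\var(C_n) = \frac{1}{n}\sum_{s,t=1}^{n} R_X(s-t)\cos(\omega_k s)\cos(\omega_k t)
\]
and apply $\cos A\cos B = \tfrac12[\cos(A-B)+\cos(A+B)]$. After the change of index $h=s-t$, the ``difference'' piece is $\tfrac12\sum_{\abs{h}<n}(1-\abs{h}/n)R_X(h)\cos(\omega_k h)$, which converges to $\tfrac12\sum_{h\in\Z}R_X(h)\cos(\omega_k h)=\pi f_X(\omega_k)$ by the absolute summability $\sum_h\abs{R_X(h)}<\infty$ (a consequence of Assumption \ref{asp:3.2} at order $k=2$) together with dominated convergence; in the white-noise case $f_X\equiv \Sigma_X/(2\pi)$ this diagonal entry reduces to $\tfrac12\Sigma_X$. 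The ``sum'' piece carries the oscillatory factor $\cos(\omega_k(s+t))$ and is $O(1/n)$ precisely because $\omega_k\neq 0,\pm\pi$ keeps the geometric sums $\sum_t e^{\pm i\omega_k t}$ bounded. The identical manipulation handles $\var(S_n)$, while for $\cov(C_n,S_n)$ the surviving term carries the odd factor $\sin(\omega_k h)$ against the even weight $(1-\abs{h}/n)R_X(h)$ and therefore vanishes; hence the limiting covariance matrix is diagonal with equal entries, giving the asserted independence of the two Gaussian coordinates.

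Asymptotic normality is then the higher-cumulant estimate. For indices $a_1,\dots,a_r$ labelling a choice of $\cos$ or $\sin$ in each slot, multilinearity of cumulants gives the joint cumulant
\[
\frac{1}{n^{r/2}}\sum_{t_1,\dots,t_r=1}^{n}\cum(X_{t_1},\dots,X_{t_r})\prod_{j=1}^{r} g_{a_j}(\omega_k t_j),
\qquad g_{a_j}\in\{\cos,\sin\}.
\]
Using stationarity to write $\cum(X_{t_1},\dots,X_{t_r})=\cum_X(t_2-t_1,\dots,t_r-t_1)$, bounding $\abs{g_{a_j}}\leq 1$, and summing the free index $t_1$ out of a range of length at most $n$, the modulus is at most
\[
n^{1-r/2}\sum_{u_1,\dots,u_{r-1}=-\infty}^{\infty}\abs{\cum_X(u_1,\dots,u_{r-1})}.
\]
Under Assumption \ref{asp:3.2} this cumulant series is finite at every order, so the bound is $O(n^{1-r/2})\to 0$ for all $r\geq 3$. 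Thus all cumulants above the second vanish in the limit and the joint Gaussian law follows.

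The step I expect to be the main obstacle is the careful disposal of the oscillatory $(s+t)$-type sums, both in the second-order term above and in the remainder pieces implicit in the higher cumulants: it is exactly the hypothesis $\omega_k\neq 0,\pm\pi$ that forces these to decay, and one must check that at the excluded frequencies they would contribute a nontrivial extra term, so the restriction on $\omega_k$ cannot be relaxed. By contrast, the higher-cumulant bookkeeping is routine once the uniform summability of Assumption \ref{asp:3.2} is invoked, so the genuine effort lies in the explicit covariance computation and in verifying that the off-diagonal entry and the oscillating remainders are negligible.
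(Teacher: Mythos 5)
Your proof is correct, and it supplies an argument where the paper supplies essentially none: the paper's entire proof of Lemma \ref{lem:6.3} is the single word ``Obvious.'' What you wrote is the standard cumulant-method proof (essentially Theorems 4.4.1--4.4.2 of \cite{brillinger2001}, which is presumably what ``Obvious'' alludes to): vanishing first cumulants, the product-to-sum computation of the second cumulants in which the $(s+t)$-type geometric sums are $O(1)$ precisely because $\omega_k \neq 0, \pm\pi$, the parity argument killing the cross-covariance, and the bound $O(n^{1-r/2})$ for every joint cumulant of order $r \geq 3$ using the summability in Assumption \ref{asp:3.2}. All of these steps check out. One small point to add: in the paper $\omega_k$ is a Fourier lattice frequency (a multiple of $2\pi/n$), hence varies with $n$; your estimates survive because $\omega_k$ converges to a limit in $(-\pi,\pi)\setminus\{0\}$, so $\lvert 1 - e^{2i\omega_k}\rvert$ stays bounded away from $0$ for large $n$ (at exact lattice frequencies the full-range oscillatory sums even vanish identically), but this should be said explicitly since you treat $\omega_k$ as fixed.

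The one substantive discrepancy is not inside your argument but between your conclusion and the lemma as printed. Your computation gives limiting covariance $\pi f_X(\omega_k) I_2$, while the lemma claims $\tfrac{1}{2}\Sigma_X I_2$ with $\Sigma_X = R_X(0)$; these coincide only when $2\pi f_X(\omega_k) = R_X(0)$, e.g.\ for white noise, exactly as you remark. For a general process satisfying Assumption \ref{asp:3.2} the lemma's constant is wrong and yours is right. Indeed yours is the value the paper needs downstream: in the proof of Theorem \ref{thm:3.3}, the periodogram satisfies $I_{n,X}(\lambda_p) = \tfrac{1}{2\pi}\{(\sqrt{n}\,\bar{C}_n)^2 + (\sqrt{n}\,\bar{S}_n)^2\}$, where $\bar{C}_n$ and $\bar{S}_n$ denote the two normalized trigonometric averages, and it is asserted to converge to an exponential law with mean $f_X(\lambda_p)$; that forces each coordinate to have asymptotic variance $\pi f_X(\lambda_p)$, whereas the printed $\tfrac{1}{2}\Sigma_X$ would give mean $R_X(0)/(2\pi)$. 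So treat your proof as establishing the corrected statement, and flag the covariance in the lemma as a misstatement rather than trying to force your limit to equal $\tfrac{1}{2}\Sigma_X I_2$.
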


\begin{proof}
 Obvious. 
\end{proof}

Then, let $C_n(m)$ be the sample autocovariance, i.e.
\[
C_n(m) = \frac{1}{n - m} \sum_{s=1}^{n-m} X_s X_{s+m}.
\]
The joint distribution of the random vector
$\sqrt{n} \,(C_n(1) - R_X(1), \cdots, C_n(l) - R_X(l), \\ 
(1/n) \sum_{t=1}^n X_t \cos(\omega_k t), (1/n)\sum_{t=1}^n X_t \sin(\omega_k t))'$
will be considered in the next lemma.
The result is applied to show the asymptotic distribution of $\sqrt{n} (\hat{\lambda}_p - \lambda_p)$.
\begin{lemma} \label{lem:6.4}
 Under Assumptions \ref{asp:3.2},
 the asymptotic joint distribution of the sample autocovariances
 and the trigonometric transforms ($\omega_k \not = -\pi, 0, \pi$) of samples
 is given by
\begin{equation} \label{eq:6.17}
\sqrt{n}\,\,
\begin{pmatrix}
 C_n(1) - R_X(1)\\
 \vdots\\
 C_n(l) - R_X(l)\\[0.2cm]
\frac{1}{n} \sum_{t=1}^n X_t \cos(\omega_k t)\\[0.2cm]
\frac{1}{n} \sum_{t=1}^n X_t \sin(\omega_k t)
 \end{pmatrix}
 \dlim
 \mathcal{N}(\bm{0},  
\begin{pmatrix}
V & \Delta_3 & \Delta_3\\
\Delta_3' & \frac{1}{2}\Sigma_X & 0\\
\Delta_3' & 0 & \frac{1}{2}\Sigma_X
\end{pmatrix}
),
\end{equation}
where the matrix $V$ is given by
\begin{multline*}
V_{m_1 m_2} =
 2\pi \freqint f_X^2(\omega) \{\exp(-i (m_2 - m_1) \omega) +  \exp(i (m_2 + m_1) \omega)\}d\omega\\
 +
 (2\pi)^{-2} \freqint\freqint
 \exp(i m_1 \omega_1 + i m_2 \omega_2)
 Q_X(\omega_1, - \omega_2, \omega_2) d\omega_1 d\omega_2.
 \end{multline*}
The $l$-vector $\Delta_3$ is a quantity defined in the proof, which is related to the third order cumulants
of the stochastic process $\{X_t\}$.
\end{lemma}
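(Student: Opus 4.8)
The plan is to establish the joint central limit theorem in \eqref{eq:6.17} via the Cram\'er--Wold device, reducing the vector convergence to the asymptotic normality of an arbitrary linear combination
\[
Z_n = \sqrt{n}\Bigl( \sum_{m=1}^{l} c_m (C_n(m) - R_X(m)) + a\, \frac{1}{n}\sum_{t=1}^n X_t \cos(\omega_k t) + b\, \frac{1}{n}\sum_{t=1}^n X_t \sin(\omega_k t)\Bigr).
\]
First I would observe that $Z_n$ is a polynomial (of degree at most two) in the process $\{X_t\}$: the sample autocovariance terms are quadratic, while the trigonometric transforms are linear. Under Assumption \ref{asp:3.2}, the summability of cumulants weighted by $(1 + \sum_j |u_j|)$ holds for every order, so the classical cumulant method of \cite{brillinger2001} applies. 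The strategy is to show that all cumulants of $Z_n$ of order three and higher vanish as $n \to \infty$, while the first and second cumulants converge to the mean $\bm{0}$ and the stated covariance matrix respectively; this yields asymptotic normality.

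The bulk of the work is the computation of the second-order structure, i.e.\ verifying the three blocks of the limiting covariance matrix. For the $V$ block, the asymptotic covariance of $\sqrt{n}(C_n(m_1) - R_X(m_1))$ and $\sqrt{n}(C_n(m_2) - R_X(m_2))$ is a standard but lengthy cumulant expansion: writing the product $X_s X_{s+m_1} X_t X_{t+m_2}$ and taking expectations produces a sum of products of pairwise covariances plus a fourth-order cumulant term. Converting the covariance pieces to spectral integrals via $R_X(h) = \freqint e^{-ih\omega} f_X(\omega)\, d\omega$ gives the $2\pi \freqint f_X^2(\omega)\{\cdots\}\, d\omega$ contribution, and the fourth-order cumulant piece, under the weighted summability assumption, converges to the displayed double integral of $Q_X$. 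For the diagonal $\tfrac12 \Sigma_X$ blocks I would appeal directly to Lemma \ref{lem:6.3}, and for the vanishing off-diagonal block between the two trigonometric transforms I would use the standard orthogonality of $\cos(\omega_k t)$ and $\sin(\omega_k t)$ summed against a stationary covariance at a Fourier frequency $\omega_k \neq -\pi, 0, \pi$. The only genuinely new quantity is $\Delta_3$, the cross-covariance between the quadratic autocovariance terms and the linear trigonometric terms; this is where the \emph{third}-order cumulants enter, since $\cov(X_s X_{s+m},\, X_t \cos(\omega_k t))$ is governed by $\cum_X$ of order three. I would simply \emph{define} $\Delta_3$ as the limit of this cross-covariance, as the statement permits.

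The main obstacle I anticipate is controlling the higher-order cumulants of the quadratic part to confirm their vanishing, and making the mean centering rigorous. For the linear terms, all cumulants above the second vanish by exact computation; the subtlety is the interaction of the quadratic $C_n(m)$ terms, whose $r$th cumulant involves $\cum_X$ up to order $2r$, together with cross terms mixing quadratic and linear factors. The key estimate is that each such cumulant carries a factor $n^{-(r/2 - 1)}$ times a bounded sum of weighted cumulants, so it tends to $0$ for $r \geq 3$; this is exactly the content for which Assumption \ref{asp:3.2}'s $(1 + \sum_j |u_j|)$-weighting is needed, as the weight absorbs the boundary effects from the finite ranges of summation (the $1/(n-m)$ normalization and the truncated index ranges). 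A secondary technical point is that $Z_n$ is not exactly centered because $E\,C_n(m) \neq R_X(m)$ for finite $n$; I would verify that $\sqrt{n}(E\,C_n(m) - R_X(m)) \to 0$ under the weighted summability, so the limiting mean is indeed $\bm{0}$. Once the cumulant estimates are in place, the conclusion \eqref{eq:6.17} follows from the cumulant characterization of the normal distribution together with the Cram\'er--Wold device.
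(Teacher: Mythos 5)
Your proposal is correct in substance and shares the paper's outer framework (the Cram\'er--Wold reduction and the same second-order computations), but the mechanism by which you obtain normality of the scalar linear combination is genuinely different. The paper rewrites the statistic as a normalized time average of the stationary vector sequence $\tilde{S}_t = (X_t X_{t+1} - R_X(1), \dots, X_t X_{t+l} - R_X(l), X_t\cos(\omega_k t), X_t\sin(\omega_k t))'$ (padding with $X_{n+1},\dots,X_{n+l}$ to remove edge effects), checks that the relevant variances and cross-covariances are $O(1/n)$ --- the cross term \eqref{eq:6.18} being exactly where $\Delta_3$ arises --- verifies a Lindeberg condition, and then invokes Lindeberg's central limit theorem, finishing with Cram\'er--Wold. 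You instead run the Brillinger cumulant method: show that the $r$-th cumulant of the linear combination is $O(n^{-(r/2-1)})$ for $r \geq 3$, so only the Gaussian limit survives. Your route uses Assumption \ref{asp:3.2} exactly where it matters (the weighted summability controls the indecomposable-partition sums arising from the product theorem for cumulants) and is self-contained; notably, it sidesteps a weakness of the paper's argument, namely that Lindeberg's theorem as usually stated applies to independent summands, whereas $\tilde{S}_t$ is a dependent stationary sequence, so the paper's appeal to it implicitly requires a CLT for dependent arrays that is not spelled out. The paper's route, in exchange, is shorter and makes the probabilistic origin of $\Delta_3$ completely explicit. Two small corrections to your write-up: for non-Gaussian $\{X_t\}$ the cumulants of order $r \geq 3$ of the purely \emph{linear} terms do not vanish exactly --- they are $O(n^{-(r/2-1)})$ like everything else, so you should simply fold them into your general estimate rather than claim exact vanishing; and your worry about centering is vacuous, since with the $1/(n-m)$ normalization one has $E\,C_n(m) = R_X(m)$ exactly. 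You should also record, as the paper does, that the limit defining $\Delta_3(m)$ actually exists under Assumption \ref{asp:3.2}, since the absolute summability of the third-order cumulants makes the averaged double sum converge.
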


\begin{proof}
The statement will be shown by Cram{\'e}r-Wold device.
Suppose $\bm{q} = (q_1, \dots, \\q_{l+2})$ and 
$(X_{n+1}, \dots, X_{n + l})$ is generated from the stationary process $\{X_t;\, t \in \Z\}$.
Then, we can define a random vector $\tilde{S}_t$ as
\[
\tilde{S}_t
=
(X_t X_{t + 1} - R_X(1), \cdots, X_t X_{t + l} - R_X(l), 
 X_t \cos(\omega_k t), X_t \sin(\omega_k t))'.
\]
Denote the left hand side of \eqref{eq:6.17} by $S_n$.
It is not difficult to see that
\[
\Babs{\frac{1}{n} \sumn[t] \tilde{S}_t - S_n}
\plim
0,
\]
since $(X_{n+1}, \dots, X_{n + l})$ is bounded.
Let us consider the random variable $\bm{q}' \tilde{S}_t$.
It holds that $E (\bm{q}' \tilde{S}_t )= 0$.
Denote the variance of $\bm{q}' \tilde{S}_t$ by $s_n = \var(\bm{q}' \tilde{S}_t)$.
Under Assumption \ref{asp:3.2}, we can find that, from \cite{ht1982},
\[
\cov(C_n(i), C_n(j)) = 
O
\Bigl(
\frac{1}{n}
\Bigr),
\]
for $i, j = 1, 2, \dots, l$, from Lemma \ref{lem:6.3}, 
\[
\var 
\Bigl(
\frac{1}{n} \sum_{t=1}^n X_t \cos(\omega_k t)
\Bigr)
=
O
\Bigl(
\frac{1}{n}
\Bigr),
\quad
\var 
\Bigl(
\frac{1}{n} \sum_{t=1}^n X_t \sin(\omega_k t)
\Bigr)
=
O
\Bigl(
\frac{1}{n}
\Bigr),
\]
and for any $1 \leq m \leq l$,
\begin{multline} \label{eq:6.18}
 \cov
 \Bigl(
 C_n(m), n^{-1}\sum_{t=1}^n X_t \cos(\omega_k t)
 \Bigr)\\
 =
 n^{-1} (n-m)^{-1}\sum_{s=1}^{n-m} \sumn[t] \cos(\omega_k t) \cum(X_s, X_{s+m}, X_t).
\end{multline}
Under Assumption \ref{asp:3.2}, the right hand side of \eqref{eq:6.18}
can be bounded by
\begin{multline*}
n^{-1} (n-m)^{-1}\sum_{s=1}^{n-m} \sumn[t] \cos(\omega_k t) \cum(X_s, X_{s+m}, X_t)\\
\leq
\frac{1}{n} \sum_{k = 1 - n}^{n - 1}
\Bigl(
1 - \frac{\abs{k}}{n} 
\Bigr)
\cum_X(m, k)
=
O
\Bigl(
\frac{1}{n}
\Bigr).
\end{multline*}
Thus, for any $\ep > 0$,
\[
n^{-1} \sumn[t] E((\bm{j}' \tilde{S}_t)^2 \ind(\abs{\bm{j}' \tilde{S}_t} >n^{1/2}  \epsilon )) \to 0,
\]
as $n \to \infty$. Now if we define
\[
\Delta_3(m) \equiv
\lim_{n \to \infty} 
\frac{1}{n} \sum_{s=1}^{n} \sum_{t=1}^{n}
\cos(\omega_k t) \cum(X_s, X_{s+m}, X_t),
\]
then $\Delta_3 = (\Delta_3(1), \dots, \Delta_3(l))'$.
By Lindeberg's central limit theorem, $n^{-1/2} \sumn[t] \bm{q}' \tilde{S_t}$
is asymptotically Gaussian distributed. 
The conclusion follows Cram{\'e}r-Wold device.
\end{proof}

Following Lemma \ref{lem:6.2}, Lemma \ref{lem:6.3} and Lemma \ref{lem:6.4}, 
we give the proof of Theorem \ref{thm:3.3}.
\begin{proof}[Theorem \ref{thm:3.3}]
Consider the following process
\begin{equation*}
M_n(\delta) = n\
\Bigl\{S_n (\lambda_p - \frac{\delta}{\sqrt{n}}) - S_n(\lambda_p)
\Bigr\}.
\end{equation*}
By Knight's identity (see \cite{knight1998}), we have
\begin{eqnarray*}
 M_n(\delta) &=& 
 -\delta \sqrt{n}
 \lprt{\freqint (p - \ind(\omega < \lambda_p)) (I_{n, X}(\omega) - f_X(\omega))d\omega}  \\
 && \quad \quad \quad +
 \freqint \int_0^{\delta/\sqrt{n}} n \,  (\ind(\omega \leq \lambda_p + s) 
 -
\ind(\omega \leq \lambda_p) I_{n, X}(\omega)  ds d\omega \\
 &=&
 M_{n1}(\delta) + M_{n2}(\delta), \quad ({\rm say}).
\end{eqnarray*}
Under Assumption \ref{asp:3.2}, we have,
by Theorem 7.6.3 in \cite{brillinger2001}, 
\[
 M_{n1}(\delta)
 \dlim
 -\delta\mathcal{N}(0, \sigma^2),
\]
where
\begin{multline*}
 \sigma^2
 =
 \pi p^2 \int_{-\pi}^{\pi} f_X(\omega)^2 d\omega
 + 
 2\pi(1-4p) \int_{-\pi}^{\lambda_p} f_X(\omega)^2 d\omega \\
 +
 2\pi
 \Bigl\{
 \int_{-\pi}^{\lambda_p} \int_{-\pi}^{\lambda_p}
 Q_X(\omega_1, \omega_2, -\omega_2) d\omega_1 d\omega_2\\
 +
 \freqint \freqint p^2
 Q_X(\omega_1, \omega_2, -\omega_2) d\omega_1 d\omega_2\\
 -
 2p   \int_{-\pi}^{\lambda_p} \freqint
  Q_X(\omega_1, \omega_2, -\omega_2) d\omega_1 d\omega_2
 \Bigr\}.
\end{multline*}
From Lemma \ref{lem:6.2}, in view of
\begin{equation} \label{eq:6.19}
n \int_{\lambda_p}^{\lambda_p + n^{-1}} I_{n, X}(\omega) d\omega
\to
I_{n, X}(\lambda_p) \quad \text{a.s.},
\end{equation}
we will use \eqref{eq:6.19} to evaluate $M_{n2}(\delta)$.
The second term $M_{n2}(\delta)$ can be evaluated by
\begin{eqnarray*}
 M_{n2}(\delta)
 &=&
 \int_{0}^{\delta/\sqrt{n}} 
 \int_{\lambda_p}^{\lambda_p+s} n \, I_{n, X}(\omega) d\omega ds \\
 &=&
 \int_0^{\delta \sqrt{n}} \Bigl (
 n\int_{\lambda_p}^{\lambda_p + t/n} \, I_{n, X}(\omega) d\omega \Bigr) dt\\
 &=&
 \int_0^{\delta \sqrt{n}} t \, \,I_{n, X}(\lambda_p)\,dt\\
 &=&
 \frac{1}{2} \,\, I_{n, X}(\lambda_p) \,\, \delta^2 \quad \text{a.s.}
\end{eqnarray*}
This term, actually, does not converge in probability,
but has an asymptotic exponential distribution $\mathscr{E}$, which has mean
$f(\lambda)$.
Applying continuous mapping theorem to the result in Lemma \ref{lem:6.4},
the following joint distribution converges in distribution, i.e.,
\[
\begin{pmatrix}
 M_{n1}(\delta)\\[0.2cm]
 M_{n2}(\delta)
\end{pmatrix}
\dlim
\begin{pmatrix}
 \mathcal{N}\\[0.2cm]
 \mathscr{E}
\end{pmatrix}.
\]
Then by continuous mapping theorem again, we obtain
\[
M_n(\delta) \dlim M(\delta) = - \delta \mathcal{N} + \frac{1}{2} \delta \mathscr{E}^2,
\]
which is minimized by $\delta = \mathscr{E}^{-1} \mathcal{N}$.
In conclusion,
\[
\sqrt{n} (\hat{\lambda}_p - \lambda ) \dlim \mathscr{E}^{-2}\mathcal{N}(0, \sigma^2),
\]
From Lemma \ref{lem:6.4}, it can be seen that 
the dependence relationship between random variables
$\mathscr{E}$ and $\mathcal{N}$ depends on $\Delta_3$, i.e.,
the third cumulants of the process $\{X_t\}$.
If $\{X_t\}$ is Gaussian or symmetric around 0, then $\Delta_3 = 0$,
which implies that $\mathscr{E}$ and $\mathcal{N}$ are independent.

\end{proof}

Below, we provide the proof of Theorem \ref{thm:4.4}.
First, an extension of Lemma A2.2 in \cite{ht1982} is given in the following.

\begin{lemma} \label{lem:6.5}
Assume $\sum_{j_1, j_2, j_3 = -\infty}^{\infty} \abs{Q_X(j_1, j_2, j_3)} < \infty$.
For any square-integrable function $\phi(\omega)$,
\begin{equation} \label{eq:6.21}
 \freqint (I_{n, Y}(\omega) - EI_{n, Y}(\omega))\phi(\omega) d\omega \plim 0. 
\end{equation}
\end{lemma}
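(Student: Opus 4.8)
The plan is to reduce the stated convergence in probability to a single variance estimate and then to control that variance using the summability of the fourth-order cumulants. Write $J_n = \freqint \phi(\omega) I_{n, Y}(\omega) d\omega$. Since the left-hand side of \eqref{eq:6.21} is exactly $J_n - E J_n$, Chebyshev's inequality reduces everything to showing $\var(J_n) \to 0$ as $n \to \infty$. Thus the whole task is to bound one variance, and no distributional information beyond second moments is needed.

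To make the variance tractable I would pass to the time domain. Expanding the periodogram as $I_{n, Y}(\omega) = \frac{1}{2\pi}\sum_{\abs{h} < n} C_n^Y(h) e^{-i h \omega}$ with $C_n^Y(h)$ the sample autocovariance, one obtains $J_n = \sum_{\abs{h} < n} c_h C_n^Y(h)$, where $c_h = \frac{1}{2\pi}\freqint \phi(\omega) e^{i h \omega} d\omega$ are the Fourier coefficients of $\phi$; these are square-summable by Parseval since $\phi \in L^2$. Consequently
\[
\var(J_n) = \sum_{\abs{h_1} < n}\sum_{\abs{h_2} < n} c_{h_1} \bar{c}_{h_2}\, \cov\bigl(C_n^Y(h_1), C_n^Y(h_2)\bigr),
\]
and the problem becomes one of estimating the covariance of sample autocovariances. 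Under the assumed summability $\sum \abs{Q_X(j_1, j_2, j_3)} < \infty$ of the fourth-order cumulants, a Bartlett/Hosoya--Taniguchi type computation (as in Lemma A2.2 of \cite{ht1982}) yields $\cov(C_n^Y(h_1), C_n^Y(h_2)) = O(1/n)$, with a structure that is a convolution in the lag variables.

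I expect the main obstacle to be that $\phi$ is merely square-integrable, so the coefficients $c_h$ are only $\ell^2$ and not $\ell^1$; one cannot crudely bound the double sum by $\sum_{h_1, h_2}\abs{c_{h_1}}\abs{c_{h_2}}\cdot O(1/n)$, since that carries $\sim n^2$ terms. Instead I would exploit the convolution structure of $\cov(C_n^Y(h_1), C_n^Y(h_2))$, recognizing the double sum as a quadratic form whose symbol, by Parseval, is an integral of the type $\frac{1}{n}\freqint\freqint \abs{\phi(\omega_1)}^2 (\cdots) \, d\omega_1 d\omega_2$; the square-summability of $c_h$ (equivalently $\phi \in L^2$) then forces $\var(J_n) = O(1/n)$. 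Equivalently, working directly in the frequency domain one uses the covariance formula for periodogram ordinates of \cite{brillinger2001}, in which the second-order part is a Fej\'er kernel acting as an approximate identity along the diagonals $\omega_1 = \pm \omega_2$ and the fourth-order part is $O(1/n)$ uniformly; integrating against $\phi \otimes \phi$ and applying Cauchy--Schwarz with $\phi \in L^2$ closes the estimate.

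A second difficulty, genuinely specific to the sinusoid model, is that the spectral distribution $F_Y$ carries atoms at $\pm \lambda_j$, so the fourth-order cumulants of $\{Y_t\}$ are not summable and Lemma A2.2 of \cite{ht1982} does not apply to $Y$ verbatim. To handle this I would split $Y_t = m_t + X_t$ with $m_t = \sum_j R_j \cos(\lambda_j t + \phi_j)$, so that $I_{n, Y} = I_{n, m} + I_{n, X} + \frac{1}{\pi n}\mathrm{Re}(D_m \overline{D_X})$, where $D_m, D_X$ denote the finite Fourier transforms of $m$ and $X$. The term $\int \phi(I_{n, X} - E I_{n, X})$ is exactly the stationary case treated above. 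For $I_{n, m}$ the resonant magnitude satisfies $\abs{D_m(\lambda_j)}^2 = \tfrac14 R_j^2 n^2 + O(n)$, with leading term independent of the random phase $\phi_j$, so $I_{n, m} - E I_{n, m}$ is of smaller order and its integral against $\phi$ is negligible; the cross term is $O_p(n^{-1/2})$ because $D_m$ is concentrated in bands of width $O(1/n)$ about $\pm \lambda_j$. Assembling the three pieces gives \eqref{eq:6.21}. I expect the most delicate point to be the $L^2$-weighted contribution of $I_{n, m}$ and of the cross term near the atoms, since $\phi$ need not even be defined pointwise at $\pm \lambda_j$.
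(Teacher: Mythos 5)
Your proposal is correct in substance and shares the paper's skeleton --- Chebyshev's inequality reduces everything to a variance bound, the variance is computed in the time domain through the Fourier coefficients of $\phi$ (your $c_h$ are exactly the paper's $\tilde{\phi}(\cdot)$), and the Hosoya--Taniguchi/Bartlett cumulant computation plus the Parseval/convolution structure yields the $O(1/n)$ rate, with square-summability of the coefficients (i.e.\ $\phi \in L^2$) rather than absolute summability doing the work. Where you genuinely diverge from the paper is in the treatment of the harmonic component. The paper applies the variance formula to $\{Y_t\}$ directly, writing the fourfold sum with $R_Y$ and $Q_Y$ and then expressing the answer via a formal Dirac-delta density $f_d(\omega) = \frac{1}{4\pi}R_j^2\delta(\omega - \lambda_j)$, so that the second-order part appears as $\frac{2\pi}{n}\int_{-\pi}^{\pi}(\phi\overline{\phi} + \phi(\omega)\overline{\phi(-\omega)})f_Y f_X\, d\omega$ with $f_Y = f_d + f_X$ and the cumulant part silently becomes $Q_X$; it never addresses the fact that $R_Y$ and the fourth-order cumulants of $\{Y_t\}$ are not summable, which is precisely the obstruction you identify. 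Your alternative --- splitting $I_{n,Y} = I_{n,m} + I_{n,X} + \frac{1}{\pi n}\mathrm{Re}(D_m\overline{D_X})$ and estimating the resonant, noise, and cross pieces separately via the concentration of the Dirichlet kernel in bands of width $O(1/n)$ --- is more laborious but makes rigorous exactly the step the paper leaves implicit (why only $f_d f_X$ and not $f_d f_d$ survives, and why $Q_Y$ may be replaced by $Q_X$). The delicate point you flag at the end is real and is shared by the paper's own statement: the displayed limiting variance involves $|\phi(\lambda_j)|^2$ through the delta, which is not well defined for a general $L^2$ function $\phi$; your band-splitting argument shows the conclusion $\plim 0$ still holds (the contribution near the atoms is controlled by $\int_{\text{band}}|\phi|^2 \to 0$ by absolute continuity of the integral), albeit possibly at a rate slower than $O(1/n)$, a caveat the paper's formula glosses over.
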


\begin{proof}
Let
 \[
 \tilde{\phi}(n) = \frac{1}{2\pi} \freqint \phi(\omega) \exp(i n \omega)d\omega.
 \]
From \cite{ht1982} and \cite{li1994}, it holds that
\begin{multline*}
\var \lprt{ \freqint (I_{n, Y}(\omega) - EI_{n, Y}(\omega))\phi(\omega) d\omega}
=\\
\frac{1}{n^2}
\sum_{t_1, t_2, t_3, t_4 = 1}^n
 \tilde{\phi}(t_1 - t_2)
  \tilde{\phi}(t_3 - t_4)
  \prt{
  R_Y(t_3 - t_1) R_{Y}(t_4 - t_2)\\
  +
  R_Y(t_4 - t_1) R_{Y}(t_3 - t_2)
  +
  Q_Y(t_2- t_1, t_3 - t_1, t_4 - t_1)}\\
  =
 \frac{2\pi}{n} \freqint (\phi(\omega) \overline{\phi(\omega)} + \phi(\omega) \overline{\phi(-\omega)} ) f_Y(\omega) f_X(\omega) d\omega\\
 +
 \frac{2\pi}{n} \freqint \freqint \phi(\omega_1) \phi(-\omega_2) Q_X(\omega_1, \omega_2, -\omega_2) d\omega_1 d\omega_2.
\end{multline*}
Here, $f_Y(\omega) = f_d(\omega) + f_X(\omega)$ if we suppose $f_d(\omega) = \frac{1}{4 \pi} R_j^2 \delta(\omega - \lambda_j)$,
where $\delta(\omega)$ is the Dirac delta function.
From Chebyshev's inequality, \eqref{eq:6.21} holds.
\end{proof}

\begin{proof}[Theorem \ref{thm:4.4}]
We only have to show the pointwise limit of $S_n^*(\theta)$ is given by $S(\theta)$.
The rest of argument for the proof follows the proof of Theorem \ref{thm:3.2}.
Note that $\hat{f}_Y(\omega)$ has a representation such that
\[
\hat{f}_Y (\omega) =
\freqint \phi(\omega - \lambda) I_{n, Y}^*(\lambda) d\lambda. 
\] 
Similarly, we have
\begin{multline*}
\abs{S_n^*(\theta) - S(\theta)}
\leq
\Babs{\freqint \rho_p(\omega - \theta) (\hat{f}_Y(\omega) - E \hat{f}_Y(\omega)) d\omega} \\
+
\Babs{\freqint \rho_p(\omega - \theta) E (\hat{f}_Y(\omega) ) d\omega
- 
\freqint \Bigl(
\freqint \rho_p(\omega - \theta) \phi(\omega - \lambda) d\omega
\Bigr)  
F_Y(d\lambda)}.
\end{multline*}
The first term in right hand side converges to 0 in probability,
which can be seen from Lemma \ref{lem:6.5}.
Under Assumption \ref{asp:4.3} (v), we see that
the second term in right hand side converges to 0 from Theorem 1.1 in \cite{hosoya1997}.

\end{proof}

Last, we give the proof of Theorem \ref{thm:4.5}.
\begin{proof}[Theorem \ref{thm:4.5}]
Consider the following process
\begin{equation*}
M_n^*(\delta) = n\
\Bigl\{S_n^* (\lambda_p - \frac{\delta}{\sqrt{n}}) - S_n^*(\lambda_p)
\Bigr\}.
\end{equation*}
By Knight's identity, we have
\begin{eqnarray*}
 M_n^*(\delta) &=& 
 -\delta \sqrt{n}
 \lprt{\freqint (p - \ind(\omega < \lambda_p)) \hat{f}_Y(\omega)d\omega}  \\
 && \quad \quad \quad +
 \freqint \int_0^{\delta/\sqrt{n}} n \,  (\ind(\omega \leq \lambda_p + s) 
 -
\ind(\omega \leq \lambda_p) \hat{f}_Y(\omega)  ds d\omega \\
 &=&
 M^*_{n1}(\delta) + M^*_{n2}(\delta), \quad ({\rm say}).
\end{eqnarray*}
From Lemma \ref{lem:6.5}, we can see that
\[
 M^*_{n1}(\delta)
 \dlim
 -\delta\mathcal{N}(0, \tilde{\sigma}^2),
\]
where $\sigma^2$ is
\begin{multline*}
 \tilde{\sigma}^2
 =
 \pi p^2 \int_{-\pi}^{\pi} \phi(\omega)^2 f_Y(\omega) f_X(\omega) d\omega
 + 
 2\pi(1-4p) \int_{-\pi}^{\lambda} \phi(\omega)^2 f_Y(\omega) f_X(\omega) d\omega \\
 +
 2\pi
 \Bigl\{
 \int_{-\pi}^{\lambda_p} \int_{-\pi}^{\lambda_p}
 Q_X(\omega_1, \omega_2, -\omega_2) d\omega_1 d\omega_2\\
 +
 \freqint \freqint p^2
 Q_X(\omega_1, \omega_2, -\omega_2) d\omega_1 d\omega_2\\
 -
 2p   \int_{-\pi}^{\lambda_p} \freqint
  Q_X(\omega_1, \omega_2, -\omega_2) d\omega_1 d\omega_2
 \Bigr\}.
\end{multline*}
As for the second term $M_{n2}(\delta)$, we have, under Assumptions \ref{asp:4.3} and \ref{asp:4.1},
\begin{eqnarray*}
 M^*_{n2}(\delta)
 &=&
 \int_{0}^{\delta/\sqrt{n}} 
 \int_{\lambda_p}^{\lambda_p+s} n \, \hat{f}_Y(\omega) d\omega ds \\
 &=&
 \int_0^{\delta \sqrt{n}} \Bigl (
 n\int_{\lambda_p}^{\lambda_p + t/n} \, \hat{f}_Y(\omega) d\omega \Bigr) dt\\
 &\plim&
 \frac{1}{2} \,\, f_Y(\lambda_p) \, \delta^2.
\end{eqnarray*}
Applying continuous mapping theorem to $M_n$, we obtain
\[
M_n^*(\delta) \dlim M^*(\delta) = - \delta \mathcal{N} +  \frac{1}{2} \,\, f_Y(\lambda_p) \, \delta^2,
\]
which is minimized by $\delta = f_Y(\lambda_p)^{-1} \mathcal{N}$.
Therefore,
\[
\sqrt{n} (\hat{\lambda}_p - \lambda ) \dlim \mathcal{N}(0, f_Y(\lambda_p)^{-2} \tilde{\sigma}^2),
\]
and the asymptotic variance $\sigma^2$ in Theorem \ref{thm:4.5} is 
$\sigma^2 = f_Y(\lambda_p)^{-2} \tilde{\sigma}^2$.

\end{proof}


\bibliographystyle{plain} 
\nocite{*}
\bibliography{freq_Liu}   

\begin{thebibliography}{10}

\bibitem{brillinger2001}
David~R Brillinger.
\newblock {\em Time Series: Data Analysis and Theory}, volume~36.
\newblock Siam, 2001.

\bibitem{dette2015}
Holger Dette, Marc Hallin, Tobias Kley, Stanislav Volgushev, et~al.
\newblock Of copulas, quantiles, ranks and spectra: An $ l\_ $\{$1$\}$
  $-approach to spectral analysis.
\newblock {\em Bernoulli}, 21(2):781--831, 2015.

\bibitem{hannan1973}
Edward~J Hannan.
\newblock The estimation of frequency.
\newblock {\em Journal of Applied probability}, 10:510--519, 1973.

\bibitem{hannan1970}
EJ~Hannan.
\newblock {\em Multiple Time Series}.
\newblock John Wiley \& Sons, 1970.

\bibitem{hosoya1989}
Yuzo Hosoya.
\newblock The bracketing condition for limit theorems on stationary linear
  processes.
\newblock {\em The Annals of Statistics}, 17(1):401--418, 1989.

\bibitem{hosoya1997}
Yuzo Hosoya.
\newblock A limit theory for long-range dependence and statistical inference on
  related models.
\newblock {\em The Annals of Statistics}, 25(1):105--137, 1997.

\bibitem{ht1982}
Yuzo Hosoya and Masanobu Taniguchi.
\newblock A central limit theorem for stationary processes and the parameter
  estimation of linear processes.
\newblock {\em The Annals of Statistics}, 10:132--153, 1982.

\bibitem{knight1998}
Keith Knight.
\newblock Limiting distributions for {\rm $l\sb{1}$} regression estimators
  under general conditions.
\newblock {\em The Annals of Statistics}, 26(2):755--770, 1998.

\bibitem{koenker2005}
Roger Koenker.
\newblock {\em Quantile Regression}.
\newblock Number~38. Cambridge university press, 2005.

\bibitem{li2008}
Ta-Hsin Li.
\newblock Laplace periodogram for time series analysis.
\newblock {\em Journal of the American Statistical Association},
  103(482):757--768, 2008.

\bibitem{li2012}
Ta-Hsin Li.
\newblock Quantile periodograms.
\newblock {\em Journal of the American Statistical Association},
  107(498):765--776, 2012.

\bibitem{li1994}
Ta-Hsin Li, Benjamin Kedem, and Sid Yakowitz.
\newblock Asymptotic normality of sample autocovariances with an application in
  frequency estimation.
\newblock {\em Stochastic Processes and their Applications}, 52(2):329--349,
  1994.

\bibitem{pollard1991}
David Pollard.
\newblock Asymptotics for least absolute deviation regression estimators.
\newblock {\em Econometric Theory}, 7(02):186--199, 1991.

\bibitem{qt1991}
B~G Quinn and P~J Thomson.
\newblock Estimating the frequency of a periodic function.
\newblock {\em Biometrika}, 78(1):65--74, 1991.

\bibitem{qh2001}
Barry~G Quinn and Edward~James Hannan.
\newblock {\em The estimation and tracking of frequency}, volume~9.
\newblock Cambridge University Press, 2001.

\bibitem{rr1988}
John~A Rice and Murray Rosenblatt.
\newblock On frequency estimation.
\newblock {\em Biometrika}, 75(3):477--484, 1988.

\bibitem{walker1971}
A~M Walker.
\newblock On the estimation of a harmonic component in a time series with
  stationary independent residuals.
\newblock {\em Biometrika}, 58(1):21--36, 1971.

\bibitem{whittle1952b}
Peter Whittle.
\newblock Tests of fit in time series.
\newblock {\em Biometrika}, 39(3):309--318, 1952.

\end{thebibliography}
\end{document}